\numberwithin{equation}{section}
\newtheorem{thm}{Theorem}[section]
\newtheorem{prop}[thm]{Proposition}
\newtheorem{cor}[thm]{Corollary}
\newtheorem{lem}[thm]{Lemma}
\theoremstyle{remark}
\newtheorem{rmk}[thm]{Remark}
\theoremstyle{definition}
\newtheorem{defn}{Definition}[section]
\DeclareMathOperator{\N}{\mathbb{N}}
\DeclareMathOperator{\R}{\mathbb{R}}
\DeclareMathOperator{\cF}{\mathcal{F}}
\DeclareMathOperator{\cM}{\mathcal{M}}
\DeclareMathOperator{\cP}{\mathcal{P}}
\DeclareMathOperator{\cR}{\mathcal{R}}
\DeclareMathOperator{\cU}{\mathcal{U}}
\DeclareMathOperator{\cA}{\mathcal{A}}
\DeclareMathOperator{\cI}{\mathcal{I}}
\DeclareMathOperator{\cH}{\mathcal{H}}
\DeclareMathOperator{\W}{\mathcal{W}}
\DeclareMathOperator{\divg}{div}
\DeclareMathOperator{\diam}{diam}
\DeclareMathOperator{\pL}{\widetilde{\Delta}_p}
\DeclareMathOperator{\iL}{\widetilde{\Delta}_\infty}
\DeclareMathOperator{\cpL}{\Delta_p}
\DeclareMathOperator{\ciL}{\Delta_\infty}
\providecommand{\keywords}[1]{\textbf{\textit{Keywords---}} #1\\}
\providecommand{\subclass}[1]{\textbf{\textit{MSC 2010 Classification---}} #1\\}
\newcommand{\Norm}[2]{\left\Vert #1 \right\Vert_{#2}}
\newcounter{daggerfootnote}
\title{The orthotropic $p$-Laplace eigenvalue problem of Steklov type  as $p\to+\infty$ }
\author{Giacomo Ascione}
\author{Gloria Paoli}
\date{}
\affil{\footnotesize Dipartimento di Matematica e Applicazioni ``Renato Caccioppoli''\\
		Universit\`a degli Studi di Napoli Federico II\\
		Via Cintia, Complesso Universitario di Monte S. Angelo\\
		80126 Napoli, Italy\\
	giacomo.ascione@unina.it, \ gloria.paoli@unina.it} 
\begin{document}
	\maketitle

	\begin{abstract}
		We study the Steklov eigenvalue problem for the $\infty-$orthotropic Laplace operator defined on convex sets of $\mathbb{R}^N$, with $N\geq2$,  considering the limit for $p\to+\infty$ of the Steklov problem for the $p-$orthotropic Laplacian. We find a limit problem that is satisfied in the  viscosity sense  and a geometric characterization of the first non trivial eigenvalue. Moreover, we prove Brock-Weinstock and Weinstock type inequalities among convex sets, stating that the ball in a suitable norm maximizes the first non trivial eigenvalue for the Steklov $\infty-$orthotropic Laplacian, once we fix the volume or the anisotropic perimeter.
	\end{abstract}
\keywords{Viscosity solution, Weinstock inequality, isodiametric inequality, non-linear operator}
\subclass{Primary: 35D40, 35P30; Secondary: 35J66}

\section{Introduction}

Let $\Omega$ be an open, bounded and convex set in $\mathbb{R}^N$, with $N\geq2$, and let $p>1$. We consider  the following operator, called the orthotropic $p-$Laplace operator, sometimes also called pseudo $p-$Laplacian,
\begin{equation*}
	\widetilde{\Delta}_p u=\sum_{i=1}^{N}\left( |u_{x_i}|^{p-2} u_{x_i}\right)_{x_i},
\end{equation*}
where  $u_{x_j}$ is the partial derivative of $u$ with respect to $x_j$, and we study the limit problem, as $p\to+\infty$, of the Steklov problem 
\begin{equation}\label{Stkprob_intro}
	\begin{cases}
		-\widetilde{\Delta}_p u=0 & \mbox{on } \Omega\\
		\sum_{j=i}^{N}|u_{x_j}|^{p-2}u_{x_j}\nu_{\partial \Omega}^j=\sigma|u|^{p-2}u\rho_p & \mbox{on }\partial \Omega,
	\end{cases}
\end{equation}
where $\nu_{\partial \Omega}=(\nu^1_{\partial \Omega}, \dots, \nu^N_{\partial \Omega})$ is the outer normal of $\partial \Omega$,  $\rho_p(x)=\Norm{\nu_{\partial \Omega}(x)}{\ell^{p'}}$, $p'$ is the coniugate exponent of $p$ and 
\begin{equation}\label{norm}
	\Norm{x}{\ell^p}^p=\sum_{j=1}^{N}|x^j|^p.
\end{equation}
The real number $\sigma$ is called Steklov eigenvalue whenever problem \eqref{Stkprob_intro} admits a non-null solution. In particular, problem \eqref{Stkprob_intro} has been investigated  in  \cite{brasco2013anisotropic}. Here, it is proven that these eigenvalues form at least a countably infinite sequence of positive numbers diverging at infinity, where the first eigenvalue is $0$ and corresponds to constant eigenfunctions.
Denoting by $\Sigma_p^p(\Omega)$  the first non-trivial eigenvalue of \eqref{Stkprob_intro}, the following variational characterization is shown (see \cite{brasco2013anisotropic}):
\begin{equation}\label{varcar_i}
	\Sigma_p^p(\Omega)=\min \left\{\frac{\int_{\Omega} \Norm{\nabla u}{\ell^p}^pdx}{\int_{\partial \Omega}|u|^p\rho_pd\cH^{N-1}}, \ u \in W^{1,p}(\Omega), \ \int_{\partial \Omega}|u|^{p-2}u\rho_pd\cH^{N-1}=0\right\}.
\end{equation}
Let us observe that the value $\Sigma_p^p(\Omega)$ represents the optimal constant in the weighted trace-type inequality
\begin{equation*}
	\int_{\Omega} \Norm{\nabla u}{\ell^p}^pdx\ge \Sigma_p^p(\Omega)\int_{\partial \Omega}|u|^p\rho_pd\cH^{N-1}
\end{equation*}
in the class of Sobolev functions $u \in W^{1,p}(\Omega)$, such that \begin{equation*}\int_{\partial \Omega}|u|^{p-2}u\rho_pd\cH^{N-1}=0.\end{equation*}
By the way, we recall that the orthotropic $p-$Laplacian was considered in \cite{Lions, Visik,Visik2}; for $p=2$ it coincides with the  Laplacian, but for $p\neq 2$ it differs from the usual $p-$Laplacian, that is defined as $\Delta_pu:={\rm div}\left(  |\nabla u|^{p-2} \nabla u \right)$.  The orthotropic $p-$ Laplacian can be considered indeed as an anisotropic operator, associated to the Finsler norm  \eqref{norm}. Let us recall that for this operator an isoperimetric inequality concerning the first Dirichlet eigenvalue has been discussed in the planar case in \cite{bognar1993lower,bognar2004isoperimetric}.
In this work we  focus our attention on the limit operator $\widetilde{\Delta}_{\infty} u$,  the so-called orthotropic $\infty$-Laplace operator, that can also be defined, see for example \cite{belloni2004pseudo,rossi2007optimal},  as 
\begin{equation*}\label{infty}
	\iL u(x)= \sum_{j \in I(\nabla u(x))}u_{x_j}^2(x)u_{x_j,x_j}(x),
\end{equation*}
where 
\begin{equation*}
	I(x):=\{j \le N: \ |x_j|=\Norm{x}{\ell^\infty}\}
\end{equation*}
and
\begin{equation*}
	\Norm{x}{\ell^\infty}=\max_{j=1,\dots, N}|x^j|.
\end{equation*}

We are inspired by the results given in \cite{garcia2006steklov}, where the authors study the Steklov eigenvalue problem for the $\infty-$Laplacian $\Delta_{\infty}$,  given by 
\begin{equation*}
	\Delta_{\infty}u=\sum_{i,j=1}^{N}u_{x_j}u_{x_i}u_{x_jx_i}.
\end{equation*}
This operator was also studied for example in \cite{espositoNeumann}, with Neumann boundary conditions, and in \cite{rossi_saintier} for mixed Dirichlet and Robin boundary conditions.

In particular, we find a limit eigenvalue problem of \eqref{Stkprob_intro} that is satisfied in a viscosity sense and we show that we can pass to the limit in the variational caracterization \eqref{varcar_i}. More precisely, we prove the following result.
\begin{thm}
	Let $\Omega$ be a bounded open convex set. It holds
	\begin{equation*}
		\lim_{p \to +\infty}\Sigma_p(\Omega)=\Sigma_\infty(\Omega)=\frac{2}{\diam_1(\Omega)},
	\end{equation*} 
	where $\diam_1(E):=\sup_{x,y\in E}||x-y||_{\ell^1}$.\\
	Moreover, if $\partial \Omega$ is $C^1$, we denote by $u_{2,p}$ an eigenfunction of \eqref{Stkprob_intro} of eigenvalue $\Sigma^p_p(\Omega)$ satisfying the normalization condition
	$$\frac{1}{V(\Omega)}\int_{\partial \Omega}|u_{2,p}|^p\rho_p d\cH^{N-1}=1,$$
	where $V(\cdot)$ is the volume. Then there exists a sequence $p_i\to+\infty$ such that $u_{2,p_i}$ converges uniformly in $\overline{\Omega}$ to $u_{2,\infty}$, that is solution of
	\begin{equation}\label{viscprob}
		\begin{cases}
			-\widetilde{\Delta}_{\infty} u=0 & \mbox{on } \Omega\\
			\Lambda(x,u,\nabla u)=0 & \mbox{on }\partial \Omega
		\end{cases}
	\end{equation}
	in the viscosity sense, where, for $(x,u,\eta) \in \partial \Omega \times \R \times \R^N$, we set
	\begin{equation*}
		\Lambda(x,u,\eta)=\begin{cases}
			\min\big\{  \Norm{\eta}{\ell^\infty}  -\Sigma_{\infty}(\Omega)|u|\;, \;\sum_{j \in I(\eta)}\eta_{j}\nu^j_{\partial\Omega}(x)  \big \} & \mbox{if } u>0\\
			\max\big\{ \Sigma_{\infty}(\Omega)|u|- \Norm{\eta}{\ell^\infty}, \sum_{j \in I(\eta)}\eta_{j}\nu^j_{\partial\Omega}(x)  \big \} & \mbox{if } u<0\\
			\sum_{j \in I(\eta)}\eta_{j}\nu^j_{\partial\Omega}(x) & \mbox{if } u=0.
		\end{cases}
	\end{equation*}
\end{thm}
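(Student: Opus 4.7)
The plan is to first obtain matching asymptotic bounds on $\Sigma_p(\Omega)$ and, in parallel, extract a uniform limit $u_{2,\infty}$ of the eigenfunctions $u_{2,p}$, and then to identify the limit problem \eqref{viscprob} in the viscosity sense. The upper bound on $\limsup_p \Sigma_p(\Omega)$ will come from a linear test function in the variational characterization \eqref{varcar_i} exploiting the $\ell^1/\ell^\infty$ duality, while the matching lower bound will come from a Lipschitz estimate for $u_{2,\infty}$ in the $\ell^1$ distance.

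\textbf{Upper bound.} By $\ell^1/\ell^\infty$ duality, $\diam_1(\Omega)=\max_{\epsilon\in\{\pm 1\}^N}\bigl(\sup_{\Omega}\langle\epsilon,x\rangle-\inf_{\Omega}\langle\epsilon,x\rangle\bigr)$; fix $\epsilon^*$ realizing this maximum and let $c_\infty$ be the midpoint of $\{\langle\epsilon^*,x\rangle : x\in\overline{\Omega}\}$. Plugging the affine test function $\phi_p(x)=\langle\epsilon^*,x\rangle-c_p$ into \eqref{varcar_i}, with $c_p$ the unique minimizer of $c\mapsto\int_{\partial\Omega}|\langle\epsilon^*,\cdot\rangle-c|^p\rho_p\,d\cH^{N-1}$ (equivalently the constant making $\phi_p$ satisfy the orthogonality condition, since the first-order minimality equation for this convex functional coincides with the orthogonality relation), one has $\int_{\Omega}\|\nabla\phi_p\|_{\ell^p}^p\,dx=NV(\Omega)$ and hence
$$\Sigma_p(\Omega)\leq \frac{(NV(\Omega))^{1/p}}{\bigl(\int_{\partial\Omega}|\phi_p|^p\rho_p\,d\cH^{N-1}\bigr)^{1/p}}.$$
A standard $L^p\to L^\infty$ stability argument gives $c_p\to c_\infty$, so the denominator tends to $\sup_{\partial\Omega}|\langle\epsilon^*,\cdot\rangle-c_\infty|=\diam_1(\Omega)/2$ while the numerator tends to $1$. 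Hence $\limsup_p\Sigma_p(\Omega)\leq 2/\diam_1(\Omega)$.

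\textbf{Compactness and lower bound.} The normalization and \eqref{varcar_i} give $\int_{\Omega}\|\nabla u_{2,p}\|_{\ell^p}^p\,dx=V(\Omega)\Sigma_p^p(\Omega)$, uniformly bounded in $p$. Hölder's inequality and the elementary estimate $\|y\|_{\ell^q}\leq N^{1/q-1/p}\|y\|_{\ell^p}$ for $q\leq p$ produce a uniform $W^{1,q}(\Omega)$ bound for every $q<\infty$; combining this with the classical maximum principle for $\widetilde{\Delta}_p$ and the boundary normalization bounds $\|u_{2,p}\|_{L^\infty(\overline{\Omega})}$ uniformly, and Morrey's embedding supplies uniform equicontinuity. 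Arzelà--Ascoli then yields a subsequence $u_{2,p_i}\to u_{2,\infty}$ uniformly on $\overline{\Omega}$ with $\nabla u_{2,p_i}\rightharpoonup \nabla u_{2,\infty}$ weakly in each $L^q$. Passing to the weak limit in the $L^q(\ell^q)$-norm and then sending $q\to\infty$ gives $\|\nabla u_{2,\infty}\|_{L^\infty(\Omega;\ell^\infty)}\leq L:=\liminf_p\Sigma_p(\Omega)$, which by integration along straight segments in the convex set $\Omega$ upgrades to the $\ell^1$-Lipschitz estimate $|u_{2,\infty}(x)-u_{2,\infty}(y)|\leq L\|x-y\|_{\ell^1}$. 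Writing the orthogonality constraint as $\int_{\partial\Omega}(u_{2,p})_+^{p-1}\rho_p\,d\cH^{N-1}=\int_{\partial\Omega}(u_{2,p})_-^{p-1}\rho_p\,d\cH^{N-1}$ and taking $(p-1)$-th roots forces $\max_{\partial\Omega}u_{2,\infty}=-\min_{\partial\Omega}u_{2,\infty}$, while the normalization fixes the common value to be $1$; transferring the pre-limit maximum principle to the limit locates these extrema globally on $\overline{\Omega}$. Evaluating the Lipschitz inequality between a maximizer and a minimizer produces $2\leq L\,\diam_1(\Omega)$, closing the lower bound and giving $\Sigma_p(\Omega)\to 2/\diam_1(\Omega)$.

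\textbf{Viscosity passage to the limit.} Interior convergence of $-\widetilde{\Delta}_p u_{2,p}=0$ to $-\widetilde{\Delta}_\infty u_{2,\infty}=0$ in the viscosity sense follows the Juutinen--Lindqvist--Manfredi scheme, adapted to the orthotropic operator as in \cite{belloni2004pseudo,rossi2007optimal}: for $\varphi$ touching $u_{2,\infty}$ at $x_0\in\Omega$ from above, one transfers the touching to the pre-limit eigenfunctions via a shifted test function, writes the viscosity inequality for $-\widetilde{\Delta}_p\varphi$ at a nearby point, factors out the leading power of $\|\nabla\varphi(x_0)\|_{\ell^\infty}$, and observes that only indices $j\in I(\nabla\varphi(x_0))$ survive as $p\to\infty$. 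The boundary identification is where the main obstacle lies. For $\varphi$ touching $u_{2,\infty}$ at $x_0\in\partial\Omega$ from above and a nearby boundary touching point $x_{p_i}$, the pre-limit viscosity boundary condition takes the form $\sum_j|\varphi_{x_j}|^{p_i-2}\varphi_{x_j}\nu_{\partial\Omega}^j-\Sigma_{p_i}^{p_i}(\Omega)|\varphi|^{p_i-2}\varphi\rho_{p_i}\leq 0$. Dividing both sides by $\|\nabla\varphi(x_{p_i})\|_{\ell^\infty}^{p_i-1}$ isolates the surviving indices $j\in I(\nabla\varphi(x_0))$ on the left, while the right-hand side becomes $(\Sigma_{p_i}|\varphi(x_{p_i})|/\|\nabla\varphi(x_{p_i})\|_{\ell^\infty})^{p_i-1}$ times bounded sign and scaling factors; this pre-factor converges to $0$, a finite constant, or $+\infty$ according as $\Sigma_\infty(\Omega)|\varphi(x_0)|$ is less than, equal to, or greater than $\|\nabla\varphi(x_0)\|_{\ell^\infty}$. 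This trichotomy, combined with the sign of $\varphi(x_0)$ through the factor $|\varphi|^{p_i-2}\varphi$, generates the three-case structure of $\Lambda$: for $u_{2,\infty}(x_0)>0$ one recovers $\min\{\|\nabla\varphi(x_0)\|_{\ell^\infty}-\Sigma_\infty(\Omega)|\varphi(x_0)|,\sum_{j\in I(\nabla\varphi(x_0))}\varphi_{x_j}\nu_{\partial\Omega}^j\}\leq 0$, and symmetrically a $\max$ for $u_{2,\infty}(x_0)<0$. The genuinely delicate point is the transitional case $u_{2,\infty}(x_0)=0$, where the right-hand side vanishes to leading order and only the pure flux condition survives; this requires a separate argument to rule out spurious sub- or super-solution structure and to carefully handle the regime $\varphi(x_{p_i})\to 0$ along $x_{p_i}\to x_0$.
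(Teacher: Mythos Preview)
Your proposal is correct and largely parallels the paper's argument, but with one genuine difference worth noting.

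\textbf{Upper bound.} Here you depart from the paper. The paper uses the test function $w_p(x)=d_1(x,x_0)-c_p$, where $d_1$ is the $\ell^1$ distance to a fixed interior point $x_0$, and after passing to the limit still has to optimize over $x_0\in\Omega$ and use the elementary bound $\sup_{x\in\partial\Omega}|d_1(x,x_0)-c|\ge \tfrac12\sup_{x\in\partial\Omega}d_1(x,x_0)$. Your choice of an affine test function $\langle\epsilon^*,x\rangle-c_p$, with $\epsilon^*\in\{\pm1\}^N$ realizing $\diam_1(\Omega)$ through the $\ell^1/\ell^\infty$ duality, is more direct: the gradient has constant $\ell^p$ norm $N^{1/p}$, and the oscillation of the linear part on $\partial\Omega$ is exactly $\diam_1(\Omega)$ from the outset, so no optimization over a base point is needed. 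Both arguments only really require that any subsequential limit $c$ of $c_p$ satisfies $\sup_{\partial\Omega}|\langle\epsilon^*,\cdot\rangle-c|\ge \diam_1(\Omega)/2$; your stronger claim $c_p\to c_\infty$ (the midpoint) is true but not strictly necessary.

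\textbf{Lower bound and viscosity passage.} These match the paper almost step by step (Proposition~\ref{prop_lim} and Theorem~\ref{viscosity_solution}): gradient bound from the normalization, compactness via Morrey, the $\ell^1$-Lipschitz estimate by integrating along segments, the symmetry $\max_{\partial\Omega}u_{2,\infty}=-\min_{\partial\Omega}u_{2,\infty}$ from the orthogonality constraint, and the standard division by $\Norm{\nabla\Phi}{\ell^\infty}^{p-2}$ at the boundary to extract the trichotomy defining $\Lambda$. Your invocation of the maximum principle to place the extrema of $u_{2,\infty}$ on $\partial\Omega$ is an extra (harmless) step the paper avoids by working directly with $\Norm{u_{2,\infty}}{L^\infty(\partial\Omega)}$. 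One small miscalibration: you flag the case $u_{2,\infty}(x_0)=0$ as the ``genuinely delicate'' boundary case, but in the paper's treatment it is in fact the easiest, since $\Sigma_\infty(\Omega)|\Phi(x_0)|=0<\Norm{\nabla\Phi(x_0)}{\ell^\infty}$ forces the right-hand side to vanish and only the flux term survives.
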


We  observe that, since the first eigenvalue of \eqref{Stkprob_intro}
is $0$ with constant eigenfunction, we can trivially pass to the limit and obtain that the first eigenvalue of \eqref{viscprob} is also $0$ with constant associated eigenfunction.

The last part of this work is dedicated to the proof of Brock-Weinstock and Weinstock type inequalities for the orthotropic $p-$Laplacian, possibly with $p=\infty$. We will use the following notation to denote respectively  the unit ball and  the anisotropic perimeter  with respect to the $\ell^p$ norm, for $p\in(1,\infty]$, 
\begin{equation*}
	\mathcal{W}_{p}=\{x\in\mathbb{R}^N\;|\Norm{x}{\ell^p}\leq 1\};
\end{equation*}
\begin{equation*}
	\cP_p(\Omega):=\int_{\partial \Omega}\rho_p(x)d\cH^{N-1}(x). \quad 
\end{equation*}
In \cite{brasco2013anisotropic}, a Brock-Weinstock type inequality of the form
\begin{equation}\label{brock}
	\Sigma^p_p(\Omega)\leq \left( \dfrac{V(\mathcal{W}_p)}{V(\Omega)}  \right)^{\frac{p-1}{N}}
\end{equation}
is proven. We recall that the Euclidean version of the Brock-Weinstock inequality was proven in \cite{brock2001isoperimetric} and its quantitative version in \cite{brasco2012spectral}. Let us also recall that (up to our knowledge) we cannot write inequality \eqref{brock} in  a fully scaling invariant form, except for $p=2$, since it is still an open problem to determine whether $\Sigma_p^p(\mathcal{W}_p)=1$ or not for $p \not = 2$, as conjectured in \cite{brasco2013anisotropic}.\\
We improve inequality \eqref{brock}, including also the anisotropic perimeter. More precisely, we obtain the following result.
\begin{thm}
	Let $\Omega \subset \R^N$ be an open bounded convex set and $p>1$. Consider $q\ge 0$ and $r \in [0,N]$ such that $\frac{p}{N}=q+\frac{r}{N}$.
	Then, we have
	\begin{equation}\label{Weins_p_intro0}
		\Sigma_{p}^p(\Omega)\cP_p(\Omega)^{\frac{r-1}{N-1}}V(\Omega)^{q}\le\cP_p(\W_p)^{\frac{r-1}{N-1}}V(\W_p)^{q}.
	\end{equation} 
\end{thm}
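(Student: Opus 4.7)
The plan is to exploit the scale invariance of inequality \eqref{Weins_p_intro0}. Under a dilation $\Omega\mapsto\lambda\Omega$ with $\lambda>0$, the variational characterization \eqref{varcar_i} gives $\Sigma_p^p(\lambda\Omega)=\lambda^{1-p}\Sigma_p^p(\Omega)$, while $V(\lambda\Omega)=\lambda^N V(\Omega)$ and $\cP_p(\lambda\Omega)=\lambda^{N-1}\cP_p(\Omega)$, so the exponent of $\lambda$ on the left-hand side of \eqref{Weins_p_intro0} is $(1-p)+(r-1)+Nq=Nq+r-p$, which vanishes by the hypothesis $p/N=q+r/N$. The inequality is therefore scale-free, and setting $X=\log\Sigma_p^p(\Omega)$, $A=\log(\cP_p(\Omega)/\cP_p(\W_p))$, $B=\log(V(\Omega)/V(\W_p))$ it takes the single affine form
\[
X+\frac{r-1}{N-1}A+qB\le 0.
\]

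Two building blocks are immediately available. The Brock--Weinstock inequality \eqref{brock} from \cite{brasco2013anisotropic} gives $X+\frac{p-1}{N}B\le 0$, and the anisotropic isoperimetric inequality for the Wulff shape yields $A\ge\frac{N-1}{N}B$ (using $\cP_p(\W_p)=NV(\W_p)$). For $r\in[0,1]$ the target is exactly Brock--Weinstock plus $\tfrac{1-r}{N-1}$ times the isoperimetric inequality, a non-negative combination whose coefficients match the prescribed ones once the constraint $Nq+r=p$ is invoked. For $r\in(1,\min(p,N)]$ such a combination would require a negative coefficient on the isoperimetric inequality, so a third ingredient is needed: a Weinstock-type endpoint
\[
\Sigma_p^p(\Omega)\,\cP_p(\Omega)^{(p-1)/(N-1)}\le \cP_p(\W_p)^{(p-1)/(N-1)}
\]
valid for convex $\Omega$ (or the analogous bound at $r=N$ when $p\ge N$). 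Once this endpoint is proven, the full family for $r\in[1,\min(p,N)]$ follows from the weighted geometric mean of Brock--Weinstock with weight $(p-r)/(p-1)$ and the Weinstock endpoint with weight $(r-1)/(p-1)$, which by $Nq+r=p$ reproduces the prescribed exponents.

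The Weinstock endpoint itself I plan to derive from the test functions $u_i(x)=x_i-c_i^{(p)}$, with $c^{(p)}\in\R^N$ selected componentwise via a continuity argument so that $\int_{\partial\Omega}|u_i|^{p-2}u_i\,\rho_p\,d\cH^{N-1}=0$ for each $i=1,\dots,N$. Each $u_i$ is then admissible in \eqref{varcar_i}, and summing over $i$ yields
\[
\Sigma_p^p(\Omega)\int_{\partial\Omega}\Norm{x-c^{(p)}}{\ell^p}^p\rho_p\,d\cH^{N-1}\le NV(\Omega).
\]
The task reduces to bounding the boundary integral from below by $NV(\Omega)\,(\cP_p(\Omega)/\cP_p(\W_p))^{(p-1)/(N-1)}$: the divergence identity $\int_{\partial\Omega}(x-c^{(p)})\cdot\nu\,d\cH^{N-1}=NV(\Omega)$, the anisotropic Cauchy--Schwarz inequality $(x-c)\cdot\nu\le\Norm{x-c}{\ell^p}\rho_p$, Hölder's inequality, and a convex-geometric refinement in the spirit of the Bucur--Ferone--Nitsch--Trombetti approach to the higher-dimensional Weinstock inequality together supply the estimate. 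The main obstacle I anticipate is this last convex-geometric step, since plain Hölder alone gives only $\Sigma_p^p(\Omega)\le(\cP_p(\Omega)/(NV(\Omega)))^{p-1}$, which is weaker than Brock--Weinstock after the isoperimetric inequality is applied; the improvement must genuinely exploit the convexity of $\Omega$ to be sharp on $\W_p$.
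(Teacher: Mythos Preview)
Your proposal has a genuine gap: the Weinstock endpoint
\[
\Sigma_p^p(\Omega)\,\cP_p(\Omega)^{(p-1)/(N-1)}\le \cP_p(\W_p)^{(p-1)/(N-1)}
\]
is not proven, and you explicitly flag the required lower bound on the boundary momentum as ``the main obstacle''. That obstacle is exactly the content of the theorem, so the argument as written is circular for $r>1$. The convex-geometric input you are missing is the anisotropic boundary $p$-momentum inequality
\[
\cI_p(\Omega):=\frac{\cM_p(\Omega)}{\cP_p(\Omega)\,V(\Omega)^{p/N}}\ \ge\ \cI_p(\W_p),
\]
where $\cM_p(\Omega)=\int_{\partial\Omega}\Norm{x}{\ell^p}^p\rho_p\,d\cH^{N-1}$; this is the anisotropic extension (proved in \cite{paoli2019anisotropic}) of the Bucur--Ferone--Nitsch--Trombetti inequality you allude to. Without it, H\"older and the divergence identity give only the weak bound you already noticed, and no amount of interpolation with Brock--Weinstock recovers the range $r>1$.

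Once you have $\cI_p(\Omega)\ge\cI_p(\W_p)$, the endpoint-plus-interpolation structure is unnecessary. The paper proceeds directly: from the coordinate test functions one gets $\Sigma_p^p(\Omega)\le NV(\Omega)/\cM_p(\Omega)$; the momentum inequality turns this into
\[
\Sigma_p^p(\Omega)\le \frac{N\,\cP_p(\W_p)\,V(\W_p)^{p/N}}{\cM_p(\W_p)\,\cP_p(\Omega)\,V(\Omega)^{p/N-1}},
\]
and a single application of the anisotropic isoperimetric inequality to the factor $\bigl(V(\Omega)^{1-1/N}/\cP_p(\Omega)\bigr)^{(N-r)/(N-1)}$ (non-negative exponent since $r\le N$) yields \eqref{Weins_p_intro0} for every admissible pair $(q,r)$ at once, after the identity $NV(\W_p)=\cM_p(\W_p)$. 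Your case split $r\le 1$ versus $r>1$ and the geometric-mean interpolation are correct but redundant once the right momentum inequality is in hand.
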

This leads to the following Weinstock-type inequality for $p \le N$:
\begin{equation}\label{Weins_p_intro}
	\Sigma_{p}^p(\Omega)\cP_p(\Omega)^{\frac{p-1}{N-1}}\le\cP_p(\W_p)^{\frac{p-1}{N-1}}.
\end{equation}
We observe that, in the case $p=2$, inequality \eqref{Weins_p_intro} is given by
\begin{equation*}
	\Sigma_{2}^2(\Omega)\cP_2(\Omega)^{\frac{1}{N-1}}\le 	 \Sigma_{2}^2(\W_2)\cP_2(\W_2)^{\frac{1}{N-1}}
\end{equation*}
(since $\Sigma_2^2(\W_2)=1$) and it has been proven in \cite{weinstock1954inequalities} in the case $N=2$ for general simply connected sets and generalized for $N>2$ in \cite{bucur2017weinstock}, under the convexity constraint. A quantitative version of such inequality has been achieved in \cite{gavitone2019quantitative}. Both the papers \cite{bucur2017weinstock,gavitone2019quantitative} rely on a particular isoperimetric inequality that has been generalized in \cite{paoli2019anisotropic}. This particular isoperimetric inequality will be used in this work to prove inequality \eqref{Weins_p_intro0}.\\
Concerning the orthotropic $\infty$-Laplacian, the characterization of $\Sigma_\infty(\Omega)$ leads to the following results.
\begin{thm}
	For any bounded open convex set $\Omega\subseteq \R^N$, it holds
	\begin{align*}\label{Wein_infty_intro}
		\Sigma_{\infty}(\Omega)V(\Omega)^{\frac{1}{N}}\le \Sigma_{\infty}(\W_1)V(\W_1)^{\frac{1}{N}}.
	\end{align*}
	Moreover, if $N=2$, it also holds
	\begin{equation*}\label{Wein_infty_intro}
		\Sigma_{\infty}(\Omega)P_\infty(\Omega)\le \Sigma_\infty(\W_1)P(\W_1).
	\end{equation*}
\end{thm}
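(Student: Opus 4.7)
My plan is to reduce both statements to purely geometric inequalities by substituting the characterization $\Sigma_{\infty}(\Omega) = 2/\diam_1(\Omega)$ established in the earlier theorem. Since $\diam_1(\W_1) = 2$, the first inequality becomes the $\ell^1$-isodiametric inequality
\[
V(\Omega) \le V(\W_1)\left(\frac{\diam_1(\Omega)}{2}\right)^N,
\]
while the second, together with the direct computation $P_\infty(\W_1) = 8$ on the four edges of the planar cross-polytope, reduces to the perimeter-diameter estimate $P_\infty(\Omega) \le 4\,\diam_1(\Omega)$ for convex sets in $\R^2$.

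For the isodiametric step I would use central symmetrization combined with Brunn--Minkowski. Setting $\widetilde\Omega = \tfrac{1}{2}(\Omega - \Omega)$, three short observations conclude the argument: first, Brunn--Minkowski yields $V(\widetilde\Omega)^{1/N} \ge \tfrac{1}{2}V(\Omega)^{1/N} + \tfrac{1}{2}V(-\Omega)^{1/N} = V(\Omega)^{1/N}$; second, a triangle-inequality argument together with the pair realizing $\diam_1(\Omega)$ gives $\diam_1(\widetilde\Omega) = \diam_1(\Omega)$ (the upper bound splits across $\|x_1-x_2\|_{\ell^1}+\|y_1-y_2\|_{\ell^1}$, and antipodal points $\tfrac{1}{2}(x^*-y^*),\tfrac{1}{2}(y^*-x^*)$ saturate it); third, any centrally symmetric convex $K$ satisfies $\diam_1(K) = 2\sup_{x\in K}\|x\|_{\ell^1}$, so $\widetilde\Omega \subseteq (\diam_1(\Omega)/2)\,\W_1$. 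Chaining these gives $V(\Omega) \le V(\widetilde\Omega) \le (\diam_1(\Omega)/2)^N V(\W_1)$, which is the desired inequality.

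For the planar perimeter estimate I would apply the two-dimensional Cauchy projection identity $\int_{\partial\Omega}|\nu^i|\,d\mathcal{H}^1 = 2\,w_{3-i}(\Omega)$, where $w_i(\Omega) = \max_{\Omega} x_i - \min_{\Omega} x_i$ is the width in the $x_i$-direction, to obtain $P_\infty(\Omega) = 2(w_1(\Omega) + w_2(\Omega))$. Choosing $p,q \in \overline{\Omega}$ realizing the extremes of $x_i$ gives $w_i(\Omega) \le \|p-q\|_{\ell^1} \le \diam_1(\Omega)$, hence $P_\infty(\Omega) \le 4\,\diam_1(\Omega)$, which is what we need.

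I expect the subtlest point to be the necessity of central symmetrization in the first inequality: without it, a general convex body (for instance the standard simplex in $\R^N$ with $N\ge3$) admits no translate contained in $(\diam_1(\Omega)/2)\,\W_1$, so the $\ell^1$-circumradius can strictly exceed half the $\ell^1$-diameter and Brunn--Minkowski is genuinely required. The restriction $N=2$ in the perimeter inequality is intrinsic, since in higher dimensions $P_\infty$ involves $(N-1)$-dimensional projections and cannot be controlled linearly by $\diam_1(\Omega)$, as one sees already by scaling a thin slab $[0,R]^2\times[0,\varepsilon]\subset\R^3$.
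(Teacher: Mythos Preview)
Your proof is correct. Both inequalities reduce, as you say, to purely geometric estimates once one substitutes $\Sigma_\infty(\Omega)=2/\diam_1(\Omega)$ and $\Sigma_\infty(\W_1)=1$, and your arguments for those estimates are sound: the difference-body construction together with Brunn--Minkowski is the classical route to the isodiametric inequality in any norm, and your planar Cauchy-projection identity $P_\infty(\Omega)=2(w_1(\Omega)+w_2(\Omega))$ followed by $w_i(\Omega)\le\diam_1(\Omega)$ immediately yields $P_\infty(\Omega)\le 4\diam_1(\Omega)$.

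The paper proceeds differently: it does not prove these geometric inequalities but invokes them as black boxes from the literature. For the volume inequality it cites the anisotropic isodiametric inequality of Piscitelli, and for the planar perimeter inequality it cites the Rosenthal--Sz\'asz inequality in Radon planes (Balestro et al.), specialized to $(\R^2,\Norm{\cdot}{\ell^1})$. The advantage of the paper's approach is brevity and the identification of equality cases, which those references provide: equality in the first inequality characterizes $\W_1$ up to translation and dilation, and equality in the second characterizes planar convex bodies of constant $\ell^1$-width. Your self-contained arguments are more elementary and do not rely on external sources, but as written they only give the inequalities; recovering the equality characterizations from your route would require tracking equality through Brunn--Minkowski (forcing $\Omega$ and $-\Omega$ to be translates, hence central symmetry, then $\widetilde\Omega=\tfrac{\diam_1(\Omega)}{2}\W_1$) and, in the planar case, analyzing when $w_1(\Omega)+w_2(\Omega)=2\diam_1(\Omega)$.
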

In particular, this means that the Brock-Weinstock inequality still holds for the orthotropic $\infty$-Laplacian with maximizer of the first non trivial Steklov eigenvalue under volume constraint given by $\W_1$ and, in case $N=2$, we also recover Weinstock inequality with the same maximizer $\W_1$ under anisotropic perimeter constraint.


The  paper is organized as follows. In Section $2$ we have collected some useful notations and some known results about the orthotropic $p-$Laplacian.  In Section $3$ we define viscosity solutions of problem \eqref{Stkprob_intro} and we prove that every continuous weak solution is a viscosity solution. In Section $4$ we give the definition of orthotropic $\infty-$Laplacian and in Section  $5$  we derive the limit eigenvalue and the limit equation as $p\to+\infty$.
Finally, in Section $6$,  we discuss Brock-Weinstock and Weinstock type inequalities for the orthotropic $p-$Laplacian, possibly $p=\infty$.

\section{The $p-$orthotropic Laplace eigenvalue with Steklov boundary condition: definitions and notations. }\label{np}
Fix $p>1$ and  an open bounded convex set $\Omega \subseteq \R^N$ and  consider the Steklov problem for the orthotropic  $p$-Laplacian operator on $\Omega$, sometimes called pesudo $p$-Laplacian, as studied in \cite{brasco2013anisotropic}, that is 
\begin{equation}\label{Stkprob}
	\begin{cases}
		-\widetilde{\Delta}_p u=0 & \mbox{on } \Omega\\
		\sum_{j=i}^{N}|u_{x_j}|^{p-2}u_{x_j}\nu_{\partial \Omega}^j=\sigma|u|^{p-2}u\rho_p & \mbox{on }\partial \Omega,
	\end{cases}
\end{equation}
where $u_{x_j}$ is the partial derivative of $u$ with respect to $x_j$, $\nu_{\partial \Omega}=(\nu^1_{\partial \Omega}, \dots, \nu^N_{\partial \Omega})$ is the outer normal of $\partial \Omega$, $\rho_p(x)=\Norm{\nu_{\partial \Omega}(x)}{\ell^{p'}}$, $p'$ is the coniugate exponent of $p$, and
\begin{equation*}
	\pL u=\divg\left(\cA_p(\nabla u)\right), \qquad \cA_p(\nabla u)=\left(|u_{x_1}|^{p-2}u_{x_1},\dots,|u_{x_N}|^{p-2}u_{x_N}\right).
\end{equation*}
We use the following notation:  for any $x \in \R^N$ and $p \ge 1$ we set
\begin{equation*}
	\Norm{x}{\ell^p}^p=\sum_{j=1}^{N}|x^j|^p,
\end{equation*}
while for $p=\infty$ we have
\begin{equation*}
	\Norm{x}{\ell^\infty}=\max_{j=1,\dots, N}|x^j|.
\end{equation*}
Solutions of \eqref{Stkprob} are to be interpreted in the weak sense; we recall here the definition of weak solution. 
\begin{defn}\label{weak_solution}
	Let $u\in W^{1,p}(\Omega)$. We say that $u$ is a weak solution of \eqref{Stkprob} if 
	\begin{equation*}
		\int_{\Omega} \langle \cA_p(\nabla u), \nabla \varphi\rangle dx=\sigma\int_{\partial \Omega} |u|^{p-2}u\varphi \rho_p d\cH^{N-1} \qquad \forall \varphi \in W^{1,p}(\Omega).
	\end{equation*}
\end{defn}
It has been shown in \cite[Section $4$]{brasco2013anisotropic} that the Steklov problem \eqref{Stkprob} admits a non-decreasing sequence of eigenvalues 
\begin{equation*}
	0=\sigma_{1,p}(\Omega)<\sigma_{2,p}(\Omega)\le \cdots,
\end{equation*}
where the first eigenvalue is trivial for any $p >1$ and corresponds to constant eigenfunctions. We denote the first non-trivial eigenvalue $\sigma_{2,p}(\Omega)=:\Sigma_p^p(\Omega)$. In \cite{brasco2013anisotropic} the following variational characterization of $\Sigma_p^p(\Omega)$ is shown:
\begin{equation}\label{varcar}
	\Sigma_p^p(\Omega)=\min \left\{\frac{\int_{\Omega} \Norm{\nabla u}{\ell^p}^pdx}{\int_{\partial \Omega}|u|^p\rho_p(x)d\cH^{N-1}}, \ u \in W^{1,p}(\Omega), \ \int_{\partial \Omega}|u|^{p-2}u\rho_p(x)d\cH^{N-1}=0\right\}.
\end{equation}
Finally, we observe that, for $C^2$ functions, we can rewrite the orthotropic $p$-Laplace operator in such a way to explicitly see where the second derivatives come into play:
\begin{equation*}\label{pLC2}
	\pL u=\sum_{j=1}^{N}(p-1)|u_{x_j}|^{p-2}u_{x_j,x_j}.
\end{equation*}
\section{Viscosity solutions of the $p$-orthotropic Steklov problem}
In the following we need to work with viscosity solutions to the Steklov problem \eqref{Stkprob}. Let us consider in this section $\Omega$ a $C^1$ open bounded convex subset of $\R^N$. Thus, we denote
\begin{equation*}
	F_p:(\xi,X) \in \R^N \times \R^{N \times N} \mapsto -\sum_{j=1}^{N}(p-1)|\xi_j|^{p-2}X_{j,j}
\end{equation*} 
and
\begin{equation*}
	B_p:(\sigma,x,u,\xi) \in \R \times \partial\Omega \times \R \times \R^N \mapsto \sum_{j=1}^{N}|\xi_j|^{p-2}\xi_j\nu_{\partial \Omega}^j(x)-\sigma |u|^{p-2}u \rho_p(x).
\end{equation*}
Following \cite{garcia2006steklov}, the Steklov problem \eqref{Stkprob} can be \textit{formally} rewritten as
\begin{equation}\label{Stekrev}
	\begin{cases}
		F_p(\nabla u,\nabla^2u)=0, & \mbox{on }\Omega \\
		B_p(\sigma,x,u,\nabla u)=0, & \mbox{on }\partial \Omega.
	\end{cases}
\end{equation}
As a consequence,  the functions $F_p$ and $B_p$ can be used to define viscosity solutions for the Steklov problem \eqref{Stkprob} (see, for instance, \cite{katzourakis2014introduction}).
\begin{defn} 
	Let $u$ be a lower (upper) semi-continuous function on the closure  $\overline{\Omega}$ of $\Omega$ and $\Phi \in C^2(\overline{\Omega})$. We say that $\Phi$ is \textbf{touching from below} (\textbf{above}) $u$ in $x_0 \in \overline{\Omega}$ if and only if $u(x_0)-\Phi(x_0)=0$ and $u(x)>\Phi(x)$ ($u(x)<\Phi(x)$) for any $x \not = x_0$ in $\overline{\Omega}$.\\
	A lower (upper) semi-continuous function $u$ on $\overline{\Omega}$ is said to be a \textbf{viscosity supersolution} (\textbf{subsolution}) of \eqref{Stekrev} if for any function $\Phi \in C^2(\overline{\Omega})$ touching from below (above) $u$ in $x_0 \in \overline{\Omega}$ one has
	\begin{align*}
		F_p(\nabla \Phi(x_0),\nabla^2 \Phi(x_0)) \ge (\le) 0 && x_0 \in \Omega;\\
		\max\{F_p(\nabla \Phi(x_0), \nabla^2 \Phi(x_0)), B_p(\sigma,x_0,\Phi(x_0),\nabla \Phi(x_0))\} \ge (\le) 0 && x_0 \in \partial \Omega.
	\end{align*}
	Finally, we say that a continuous function $u$ on $\overline{\Omega}$ is a \textbf{viscosity solution} if it is both viscosity subsolution and supersolution.
\end{defn}
We need the following technical Lemma, whose proof is given in \cite[Section $10$]{lindqvist2017notes}.
\begin{lem}\label{lemp}
	Let $n \in \N$ and $x,y \in \R^n$. For $p \ge 2$ we have
	\begin{equation*}
		\langle |x|^{p-2}x-|y|^{p-2}y,x-y\rangle \ge 2^{2-p}|x-y|^p.
	\end{equation*}
\end{lem}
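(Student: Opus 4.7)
The plan is to express the left side via the fundamental theorem of calculus applied to the map $F(z) = |z|^{p-2}z$, reduce to a one-parameter integral by rotational invariance, and recognize the integrand as an exact derivative, thereby collapsing the problem to an elementary one-variable minimization. Away from the origin, $DF(z)h = |z|^{p-2}h + (p-2)|z|^{p-4}\langle z, h\rangle z$, so integrating along $z_t = y + t(x-y)$, $t \in [0,1]$, yields
\[
\langle |x|^{p-2}x - |y|^{p-2}y,\, x-y\rangle = \int_0^1 \Bigl[|z_t|^{p-2}|x-y|^2 + (p-2)|z_t|^{p-4}\langle z_t, x-y\rangle^2\Bigr] dt.
\]
If $[y,x]$ passes through the origin the integrand is still locally integrable for $p \geq 2$ and a limiting argument applies. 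Both bracketed terms are nonnegative for $p \geq 2$, but simply dropping the second, as one does to prove pure monotonicity, loses a factor of $p-1$ and is insufficient for the sharp constant $2^{2-p}$.

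By positive homogeneity of both sides in $(x,y)$ and by rotational invariance I may assume $|x-y| = 1$ and $x - y = e_1$. Writing $y = b e_1 + b_\perp$ with $b_\perp \perp e_1$ and $r = |b_\perp|$, one has $|z_t|^2 = (b+t)^2 + r^2$ and $\langle z_t, e_1\rangle = b+t$, so the integrand simplifies to $[(b+t)^2 + r^2]^{(p-4)/2}[(p-1)(b+t)^2 + r^2]$. The key step, and the one I expect to be the main obstacle to spot even though verifying it is a one-line computation, is the antiderivative identity
\[
[(b+t)^2+r^2]^{(p-4)/2}[(p-1)(b+t)^2+r^2] = \frac{d}{dt}\Bigl[(b+t)\bigl((b+t)^2 + r^2\bigr)^{(p-2)/2}\Bigr],
\]
which reduces the whole integral to the finite difference $f(b+1) - f(b)$, where $f(s) := s(s^2+r^2)^{(p-2)/2}$.

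It then remains to show $\min_{b\in\R}\bigl(f(b+1)-f(b)\bigr) \geq 2^{2-p}$. The function $f$ is odd in $s$, and $f'(s) = (s^2+r^2)^{(p-4)/2}[(p-1)s^2+r^2]$ is even in $s$, nonnegative, and strictly increasing on $[0,\infty)$ for $p \geq 2$. Consequently $g(b) := f(b+1)-f(b)$ has a unique critical point at $b = -1/2$ (the only point where $|b+1|=|b|$), and since $g(b)\to+\infty$ as $|b|\to\infty$ for $p>2$ while $g\equiv 1$ for $p=2$, this critical point furnishes the global minimum. Using the oddness of $f$,
\[
g(-1/2) = 2 f(1/2) = \bigl(\tfrac14 + r^2\bigr)^{(p-2)/2} \geq \bigl(\tfrac14\bigr)^{(p-2)/2} = 2^{2-p},
\]
with equality precisely when $r = 0$, i.e.\ when $y = -x$, completing the proof.
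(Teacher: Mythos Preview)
Your argument is correct. The paper does not actually prove this lemma; it merely cites Lindqvist's notes (Section~10), so you have supplied a self-contained proof where the paper defers to a reference.

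One remark on the structure: the integral representation via the fundamental theorem of calculus is a detour. After normalizing to $x-y=e_1$ and writing $y=be_1+b_\perp$, $x=(b+1)e_1+b_\perp$, the left-hand side is \emph{directly}
\[
\langle |x|^{p-2}x-|y|^{p-2}y,\,e_1\rangle = |x|^{p-2}x_1-|y|^{p-2}y_1 = f(b+1)-f(b),
\]
with $f(s)=s(s^2+r^2)^{(p-2)/2}$, so the ``key antiderivative identity'' you highlight is just recovering this same formula by a second route. The substantive content of the proof is the final minimization of $g(b)=f(b+1)-f(b)$, which you carry out correctly: $f'$ is even and (for $p>2$) strictly increasing on $[0,\infty)$, forcing the unique critical point $b=-1/2$ to be the global minimum, with value $(1/4+r^2)^{(p-2)/2}\ge 2^{2-p}$; the case $p=2$ is trivial. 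A minor slip: you write ``strictly increasing on $[0,\infty)$ for $p\ge 2$'', but for $p=2$ the derivative $f'\equiv 1$ is constant --- harmless since you treat $p=2$ separately.
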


Now we are ready to show the following result, which is the $p$-orthotropic version of \cite[Lemma $2.1$]{garcia2006steklov}.
\begin{prop}\label{weak_is_viscosity}
	Fix $p \ge 2$. Let $u$ be a weak solution of the Steklov problem \eqref{Stkprob} that is continuous in $\overline{\Omega}$. Then it is a viscosity solution of \eqref{Stekrev}.
\end{prop}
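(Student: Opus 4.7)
The plan is to argue by contradiction, proving the supersolution property; the subsolution part is symmetric. Fix $\Phi\in C^2(\overline\Omega)$ touching $u$ from below at $x_0\in\overline\Omega$, so $u-\Phi\ge 0$ on $\overline\Omega$ with unique minimum $0$ at $x_0$. The natural test function is $\varphi_\epsilon:=(\Phi+\epsilon-u)_+$ for small $\epsilon>0$: its support $\{u-\Phi<\epsilon\}$ contains $x_0$, and by the strict touching together with compactness of $\overline\Omega$ it can be forced into any preassigned neighborhood $B_r(x_0)\cap\overline\Omega$ by choosing $\epsilon<\min_{\overline\Omega\setminus B_r(x_0)}(u-\Phi)$. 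Plugging $\varphi_\epsilon$ into Definition \ref{weak_solution} and integrating by parts against $\Phi$ (permissible since $\Phi$ is $C^2$ and, for $p\ge 2$, $s\mapsto|s|^{p-2}s$ is $C^1$) yields, after using $\nabla\varphi_\epsilon=(\nabla\Phi-\nabla u)\chi_{\{\Phi+\epsilon>u\}}$, the identity
\begin{equation*}
-\int_{\{\Phi+\epsilon>u\}}\langle \cA_p(\nabla u)-\cA_p(\nabla\Phi),\,\nabla u-\nabla\Phi\rangle\,dx=\int_\Omega \widetilde{\Delta}_p\Phi\cdot \varphi_\epsilon\,dx+\Gamma_\epsilon,
\end{equation*}
where $\Gamma_\epsilon:=\int_{\partial\Omega}\bigl(\sigma|u|^{p-2}u\rho_p-\langle\cA_p(\nabla\Phi),\nu_{\partial\Omega}\rangle\bigr)\varphi_\epsilon\,d\cH^{N-1}$. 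Lemma \ref{lemp}, applied componentwise to the orthotropic vector field $\cA_p$ (a direct sum of one-dimensional monotone maps), makes the left-hand side nonpositive.

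If $x_0\in\Omega$, I would choose $r$ with $B_r(x_0)\subset\Omega$, so that $\Gamma_\epsilon=0$. Assuming the viscosity supersolution condition fails means $\widetilde{\Delta}_p\Phi(x_0)>0$; by continuity $\widetilde{\Delta}_p\Phi>0$ throughout $B_r(x_0)$ for small $r$, forcing the right side of the identity to be strictly positive, a contradiction.

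If $x_0\in\partial\Omega$, assume both $F_p(\nabla\Phi(x_0),\nabla^2\Phi(x_0))<0$ and $B_p(\sigma,x_0,\Phi(x_0),\nabla\Phi(x_0))<0$. The $C^1$ regularity of $\partial\Omega$ makes $\nu_{\partial\Omega}$ and $\rho_p$ continuous, so both strict inequalities persist on $B_r(x_0)\cap\overline\Omega$ for small $r$. I rewrite
\begin{equation*}
\Gamma_\epsilon=-\int_{\partial\Omega}B_p(\sigma,x,\Phi,\nabla\Phi)\,\varphi_\epsilon\,d\cH^{N-1}+R_\epsilon,\qquad R_\epsilon:=\sigma\int_{\partial\Omega}(|u|^{p-2}u-|\Phi|^{p-2}\Phi)\varphi_\epsilon\rho_p\,d\cH^{N-1}.
\end{equation*}
Using $0\le u-\Phi<\epsilon$ on $\operatorname{supp}\varphi_\epsilon$ together with local Lipschitz continuity of $s\mapsto|s|^{p-2}s$, one obtains $|R_\epsilon|\le C\epsilon\int_{\partial\Omega}\varphi_\epsilon\,d\cH^{N-1}$, while the two leading terms on the right are strictly positive of order $\int\varphi_\epsilon$. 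Hence for $\epsilon$ small enough the right side is strictly positive while the left is nonpositive, a contradiction. The main technical hurdle is exactly this boundary case: one must absorb the error $R_\epsilon$ stemming from $|u|^{p-2}u-|\Phi|^{p-2}\Phi$ into the strict negativity of $B_p$, a balance that survives uniformly on the shrinking support only thanks to the continuity provided by the $C^1$ regularity of $\partial\Omega$.
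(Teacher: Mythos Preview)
Your proof is correct and follows essentially the same contradiction-via-monotonicity argument as the paper: both use the test function $(\Phi+c-u)_+$ (the paper takes $c=m/2$ with $m=\inf_{\partial B_r\cap\overline\Omega}(u-\Phi)$, you take $c=\epsilon$ small) together with Lemma~\ref{lemp} applied componentwise. The one simplification in the paper worth noting concerns what you call ``the main technical hurdle'': rather than isolating the error $R_\epsilon$, the paper extends by continuity the strict inequality $B_p(\sigma,x,u(x),\nabla\Phi(x))<0$ with $u(x)$ (not $\Phi(x)$) in the third slot, so that $\Gamma_\epsilon$ is directly positive and no absorption of $R_\epsilon$ is needed.
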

\begin{proof}
	Let us show that $u$ is a viscosity supersolution of \eqref{Stekrev}, since for the subsolution the proof is analogous. Consider $\Phi \in C^2(\overline{\Omega})$ touching from below $u$ in $x_0 \in \overline{\Omega}$. Let us first consider $x_0 \in \Omega$. We want to show that
	\begin{equation*}
		F_p(\nabla \Phi(x_0), \nabla^2 \Phi(x_0))\ge 0,
	\end{equation*}
	thus we suppose by contradiction that
	\begin{equation*}
		F_p(\nabla \Phi(x_0), \nabla^2 \Phi(x_0))< 0.
	\end{equation*}
	Since $\Phi \in C^2$, there exists a radius $r>0$ such that for any $x \in B_r(x_0)$ it holds
	\begin{equation*}
		F_p(\nabla \Phi(x), \nabla^2 \Phi(x))< 0.
	\end{equation*}
	Consider then 
	\begin{equation*}
		m=\inf_{x \in \partial B_r(x_0)}|u(x)-\Phi(x)|=\inf_{x \in \partial B_r(x_0)}(u(x)-\Phi(x))
	\end{equation*} 
	and define $\Psi(x)=\Phi(x)+\frac{m}{2}$. Since $\Psi$ and $\Phi$ differ only by a constant, $\nabla \Psi=\nabla \Phi$ and $\nabla^2 \Psi=\nabla^2 \Phi$. Hence, for any $x \in B_r(x_0)$, it holds
	\begin{equation*}
		F_p(\nabla \Psi(x), \nabla^2 \Psi(x))< 0
	\end{equation*}
	that is to say
	\begin{equation*}
		-\pL \Psi(x)<0.
	\end{equation*}
	This leads, for any non-negative test function $\varphi \in W^{1,p}_0(B_r(x_0))$ with $\varphi \not \equiv 0$,  to
	\begin{equation*}
		\sum_{j=1}^{N}\int_{B_r(x_0)}|\Psi_{x_j}|^{p-2}\Psi_{x_j}\varphi_{x_j}dx < 0.
	\end{equation*}
	Moreover, being $u$ a weak solution of \eqref{Stkprob}, we have, for any $\varphi \in W^{1,p}_0(B_r(x_0))$,
	\begin{equation*}
		\sum_{j=1}^{N}\int_{B_r(x_0)} |u_{x_j}|^{p-2}u_{x_j}\varphi_{x_j}dx=0.
	\end{equation*}
	Thus we get, for any non-negative text function $\varphi \in W^{1,p}_0(B_r(x_0))$,
	\begin{equation*}
		\sum_{j=1}^{N}\int_{B_r(x_0)} (|\Psi_{x_j}|^{p-2}\Psi_{x_j}-|u_{x_j}|^{p-2}u_{x_j})\varphi_{x_j}dx < 0.
	\end{equation*}
	Let us observe that $\Psi(x_0)-u(x_0)=\frac{m}{2}>0$. On the other hand, since $u$ and $\Phi$ are continuous, there exists a radius $r_*>0$ such that $u(x)-\Phi(x)\ge \frac{m}{2}$ for any $x \in B_r(x_0)\setminus B_{r_*}(x_0)$. In particular, for any $x \in B_r(x_0)\setminus B_{r_*}(x_0)$ it holds $\Psi(x)-u(x)\le 0$. Thus, the function $\varphi=(\Psi-u)^+\chi_{B_r(x_0)}$ can be expressed as
	\begin{equation*}
		\varphi(x)=\begin{cases} (\Psi-u)^+ & x \in B_r(x_0)\\
			0 & x \not \in B_{r_*}(x_0),
		\end{cases}
	\end{equation*} 
	where the two definitions agree in $B_{r}(x_0)\setminus B_{r_*}(x_0)$. Finally, we can observe that, being $\Psi$ and $u$ both in $W^{1,p}(B_r(x_0))$, $\Psi-u$ is in $W^{1,p}(B_r(x_0))$ and then also its positive part (see \cite{savare1996regularity} and references therein). Since we have shown that $\varphi \in W_0^{1,p}(B_r(x_0))$, we can use it as a test function to achieve
	\begin{equation*}
		\sum_{j=1}^{N}\int_{\{\Psi>u\}\cap B_r(x_0)} (|\Psi_{x_j}|^{p-2}\Psi_{x_j}-|u_{x_j}|^{p-2}u_{x_j})(\Psi_{x_j}-u_{x_j})dx<0.
	\end{equation*}
	Thus, by Lemma \ref{lemp}, we obtain
	\begin{align*}
		0\le \sum_{j=1}^{N}&\int_{\{\Psi>u\}\cap B_r(x_0)}|\Psi_{x_j}-u_{x_j}|^pdx \\&\le C(p)\sum_{j=1}^{N}\int_{\{\Psi>u\}\cap B_r(x_0)}(|\Psi_{x_j}|^{p-2}\Psi_{x_j}-|u_{x_j}|^{p-2}u_{x_j})(\Psi_{x_j}-u_{x_j})dx<0, 
	\end{align*}
	which is absurd.\\
	Now let us consider $x_0 \in \partial \Omega$. As before, let us argue by contradiction, supposing that
	\begin{equation*}
		\max\{F_p(\nabla \Phi(x_0),\nabla^2\Phi(x_0)),B_p(\sigma,x_0,u(x_0),\nabla \Phi(x_0))\}<0.
	\end{equation*}
	Thus, since $\Phi \in C^2$ and $u \in C^0$, there exists a radius $r>0$ such that, for any $x \in B_r(x_0)\cap \Omega$, it holds
	\begin{equation*}
		F_p(\nabla \Phi(x),\nabla^2\Phi(x))<0, 
	\end{equation*}
	while, for any $x \in B_r(x_0)\cap \partial \Omega$, it holds
	\begin{equation*}
		\max\{F_p(\nabla \Phi(x),\nabla^2\Phi(x)),B_p(\sigma,x,u(x),\nabla \Phi(x))\}<0.
	\end{equation*}
	As before, let us consider 
	\begin{equation*}
		m=\inf_{x \in \partial B_r(x_0)\cap \overline{\Omega}}|u(x)-\Phi(x)|=\inf_{x \in \partial B_r(x_0)\cap \overline{\Omega}}(u(x)-\Phi(x))
	\end{equation*}
	and define $\Psi(x)=\Phi(x)+\frac{m}{2}$. We have that, for any $x \in B_r(x_0) \cap \Omega$, it holds
	\begin{equation*}
		F_p(\nabla \Psi(x),\nabla^2\Psi(x))<0,
	\end{equation*}
	while, for any $x \in B_r(x_0)\cap \partial \Omega$, it holds
	\begin{equation*}
		\max\{F_p(\nabla \Psi(x),\nabla^2\Psi(x)),B_p(\sigma,x,u(x),\nabla \Psi(x))\}<0.
	\end{equation*}
	From the fact that $F_p(\nabla \Psi(x),\nabla^2 \Psi(x))<0$, we achieve
	\begin{equation*}
		-\pL \Psi(x)<0.
	\end{equation*}
	Let us consider a non-negative test function $\varphi \in W^{1,p}(B_r(x_0) \cap \Omega)$ such that $\varphi \not \equiv 0$ and $\varphi=0$ on $\partial B_r(x_0) \cap \Omega$. It holds
	\begin{equation*}
		\sum_{j=1}^{N}\int_{B_r(x_0) \cap \Omega}|\Psi_{x_j}|^{p-2}\Psi_{x_j}\varphi_{x_j}dx< \sum_{j=1}^{N}\int_{B_r(x_0) \cap \partial \Omega}|\Psi_{x_j}|^{p-2}\Psi_{x_j}\varphi \nu_{\partial \Omega}^jd\cH^{N-1}.
	\end{equation*}
	Now, since $B_p(\sigma,x,u(x),\nabla \Psi(x))<0$, we have, for $x \in B_r(x_0)\cap \partial \Omega$, 
	\begin{equation*}
		\sum_{j=1}^{N}|\Psi_{x_j}(x)|^{p-2}\Psi_{x_j}(x)\nu_{\partial \Omega}^j(x)<\sigma |u(x)|^{p-2}u(x) \rho_p(x)
	\end{equation*}
	and consequently
	\begin{equation*}
		\sum_{j=1}^{N}\int_{B_r(x_0) \cap \Omega}|\Psi_{x_j}|^{p-2}\Psi_{x_j}\varphi_{x_j}dx< \sigma\int_{B_r(x_0) \cap \partial \Omega}|u|^{p-2}u\varphi \rho_pd\cH^{N-1}.
	\end{equation*}
	Moreover, being $u$ a weak solution of \eqref{Stkprob},  we have
	\begin{equation*}
		\sum_{j=1}^{N}\int_{B_r(x_0) \cap \Omega}|u_{x_j}|^{p-2}u_{x_j}\varphi_{x_j}dx=\sigma\int_{B_r(x_0) \cap \partial \Omega}|u|^{p-2}u\varphi. \rho_pd\cH^{N-1}.
	\end{equation*}
	Hence we obtain
	\begin{equation*}
		\sum_{j=1}^{N}\int_{B_r(x_0) \cap \Omega}(|\Psi_{x_j}|^{p-2}\Psi_{x_j}-|u_{x_j}|^{p-2}u_{x_j})\varphi_{x_j}dx<0.
	\end{equation*}
	Let us consider $\varphi=(\Psi-u)^+\chi_{B_r(x_0)\cap \overline{\Omega}}$. Arguing as before we have that $\varphi \in W^{1,p}(\Omega \cap B_r(x_0))$ and $\varphi=0$ on $\partial B_r(x_0) \cap \Omega$, thus we can use it as test function to achieve
	\begin{align*}
		0\le \sum_{j=1}^{N}&\int_{\{\Psi>u\}\cap B_r(x_0)\cap \Omega}|\Psi_{x_j}-u_{x_j}|^pdx\\
		&\le C(p) \sum_{j=1}^{N}\int_{\Omega}(|\Psi_{x_j}|^{p-2}\Psi_{x_j}-|u_{x_j}|^{p-2}u_{x_j})(\Psi_{x_j}-u_{x_j})dx<0, 
	\end{align*}
	which is absurd.
\end{proof}
\begin{rmk}
	Concerning the regularity of a weak solution $u$ of $-\pL u=0$, let us observe that, for $p \ge 2$, orthotropic $p$-harmonic functions are locally Lipschitz in $\Omega$ (see \cite{bousquet2018lipschitz}) and, in particular, in dimension $2$ they are $C^1(\Omega)$ for any $p>1$ (see \cite{bousquet2018c1,lindqvist2018regularity}). We will actually work with $p \to +\infty$, hence we can suppose $p>N$. In such case, Morrey's embedding theorem ensures that $u \in C^0(\overline{\Omega})$. We can conclude that for $p>N$, every weak solution of \eqref{Stkprob} is a viscosity solution of \eqref{Stekrev}.
\end{rmk}
\section{The orthotropic $\infty$-Laplacian: heuristic derivation}
We want to study the problem \eqref{Stkprob} as $p \to +\infty$. To do this, we need to introduce the orthotropic $\infty$-Laplacian as the formal limit as $p \to +\infty$ of $\pL$. The operator  $\pL$ can be interpreted as the anistropic $p$-Laplace operator associated to the norm $\cF_p(x)=\Norm{x}{\ell^p}$, i. e.
\begin{equation*}
	\pL u=\divg\left(\frac{1}{p}\nabla_x \cF_p^p(\nabla u)\right).
\end{equation*}
In the classic case the $\infty$-Laplacian $\ciL$ was achieved from the $p$-Laplacian $\cpL$ by dividing by $(p-2)|\nabla u|^{p-4}$ and then formally taking the limit as $p \to +\infty$ (see \cite{lindqvist2016notes}). Here we work in the same fashion using $\Norm{\nabla u}{\ell^p}$. Before doing this, let us recall the following easy result.
\begin{lem}\label{lemunifconv}
	The functions $\Norm{\cdot}{\ell^p}$ uniformly converge to $\Norm{\cdot}{\ell^\infty}$ as $p \to +\infty$ and to $\Norm{\cdot}{\ell^1}$ as $p \to 1$ in any compact set $K\subseteq \R^N$.
\end{lem}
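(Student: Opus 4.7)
The plan is to reduce uniform convergence to a pair of two-sided pointwise inequalities between $\ell^p$ and the target norm, and then exploit boundedness on the compact set $K$ to make the discrepancy vanish uniformly.

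For the limit $p\to+\infty$, I would first observe the elementary bounds
\begin{equation*}
\Norm{x}{\ell^\infty}\le \Norm{x}{\ell^p}\le N^{1/p}\Norm{x}{\ell^\infty},
\end{equation*}
where the left inequality comes from $|x^j|^p\le \sum_i |x^i|^p$ for each $j$, and the right from $\sum_j|x^j|^p\le N\Norm{x}{\ell^\infty}^p$. Hence $0\le \Norm{x}{\ell^p}-\Norm{x}{\ell^\infty}\le (N^{1/p}-1)\Norm{x}{\ell^\infty}$. Since $K$ is compact, $M:=\sup_{x\in K}\Norm{x}{\ell^\infty}<+\infty$, so
\begin{equation*}
\sup_{x\in K}\bigl|\Norm{x}{\ell^p}-\Norm{x}{\ell^\infty}\bigr|\le (N^{1/p}-1)M\xrightarrow{p\to+\infty}0.
\end{equation*}

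For the limit $p\to 1$, the analogous sandwich comes from a single application of H\"older's inequality: writing $\Norm{x}{\ell^1}=\sum_j|x^j|\cdot 1$ one gets $\Norm{x}{\ell^1}\le N^{1-1/p}\Norm{x}{\ell^p}$, while monotonicity of $\ell^p$-norms (for $p\ge1$) gives $\Norm{x}{\ell^p}\le \Norm{x}{\ell^1}$. Thus
\begin{equation*}
0\le \Norm{x}{\ell^1}-\Norm{x}{\ell^p}\le (N^{1-1/p}-1)\Norm{x}{\ell^p}\le (N^{1-1/p}-1)\Norm{x}{\ell^1},
\end{equation*}
and taking the supremum over $x\in K$ (where $\Norm{x}{\ell^1}$ is bounded) and letting $p\to 1$ yields the uniform conclusion since $N^{1-1/p}\to 1$.

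There is essentially no obstacle here: the only point worth a line of care is that the constants $N^{1/p}-1$ and $N^{1-1/p}-1$ depend only on $p$ and $N$, so the compactness of $K$ enters solely through boundedness of the relevant norm, and both convergences follow by the squeeze principle. I would present this as a short two-case argument, each case occupying a single display.
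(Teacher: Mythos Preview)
Your proposal is correct and follows essentially the same approach as the paper: both arguments use the two-sided bounds $\Norm{x}{\ell^\infty}\le \Norm{x}{\ell^p}\le N^{1/p}\Norm{x}{\ell^\infty}$ and $\Norm{x}{\ell^p}\le \Norm{x}{\ell^1}\le N^{1-1/p}\Norm{x}{\ell^p}$, then bound the discrepancy by a factor depending only on $p$ times $\sup_{x\in K}$ of the target norm. If anything, your version is slightly cleaner in that you justify the inequalities and get the signs of the constants right (the paper writes $(1-N^{1/p})$ and $(1-N^{1/p-1})$ where it means the opposite).
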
 
\begin{proof}
	Let us recall that for any $x \in \R^N$
	\begin{equation}\label{ineq1}
		\Norm{x}{\ell^\infty}\le \Norm{x}{\ell^p}\le N^{\frac{1}{p}}\Norm{x}{\ell^\infty}, 
	\end{equation}
	thus we have that for any compact $K \subseteq \R^N$ (setting $M_{\infty}=\max_{x \in K}\Norm{x}{\ell^\infty}$)
	\begin{equation*}
		|\Norm{x}{\ell^p}-\Norm{x}{\ell^\infty}|\le (1-N^{\frac{1}{p}})\Norm{x}{\ell^\infty}\le M_\infty(1-N^{\frac{1}{p}}).
	\end{equation*}
	Let us also recall that
	\begin{equation*}
		\Norm{x}{\ell^p}\le \Norm{x}{\ell^1}\le N^{1-\frac{1}{p}}\Norm{x}{\ell^p}, 
	\end{equation*}
	so  we have that for any compact $K \subseteq \R^N$ (setting $M_{1}=\max_{x \in K}\Norm{x}{\ell^1}$)
	\begin{equation*}
		|\Norm{x}{\ell^p}-\Norm{x}{\ell^1}|\le (1-N^{\frac{1}{p}-1})\Norm{x}{\ell^1}\le M_1(1-N^{\frac{1}{p}-1}).
	\end{equation*}
\end{proof}
The previous Lemma allows us to work directly with $\Norm{\nabla u}{\ell^\infty}$, instead of working with $\Norm{\nabla u}{\ell^p}$.
Suppose $u \in C^2$ and write
\begin{align*}
	\pL u&=(p-1)\sum_{j=1}^{N}|u_{x_j}|^{p-4}u_{x_j}^2u_{x_j,x_j},
\end{align*}
i.e.
\begin{equation*}
	\frac{\pL u}{p-1}=\sum_{j=1}^{N}|u_{x_j}|^{p-4}u_{x_j}^2u_{x_j,x_j}.
\end{equation*}
Dividing everything by $\Norm{\nabla u}{\ell^\infty}^{p-4}$, we achieve
\begin{equation}\label{pLdiv}
	\frac{\pL u}{(p-1)\Norm{\nabla u}{\ell^\infty}^{p-4}}=\sum_{j=1}^{N}\left|\frac{u_{x_j}}{\Norm{\nabla u}{\ell^\infty}}\right|^{p-4}u_{x_j}^2u_{x_j,x_j}.
\end{equation}
If we  consider the set
\begin{equation*}
	I(x):=\{j \le N: \ |x_j|=\Norm{x}{\ell^\infty}\},
\end{equation*}
we can rewrite equation \eqref{pLdiv} as
\begin{equation*}\label{forma_comoda}
	\frac{\pL u}{(p-1)\Norm{\nabla u}{\ell^\infty}^{p-4}}=\sum_{j \in I(\nabla u(x))}u_{x_j}^2u_{x_j,x_j}+\sum_{j \not \in I(\nabla u(x))}\left|\frac{u_{x_j}}{\Norm{\nabla u}{\ell^\infty}}\right|^{p-4}u_{x_j}^2u_{x_j,x_j}.
\end{equation*}
Finally, taking the limit as $p \to +\infty$ and recalling that for any $j \not \in I_\infty(\nabla u(x))$ we have $\left|\frac{u_{x_j}}{\Norm{\nabla u}{\ell^\infty}}\right|<1$, we achieve
\begin{equation*}
	\iL u=\lim_{p \to +\infty}\frac{\pL u}{(p-1)\Norm{\nabla u}{\ell^\infty}^{p-4}}=\sum_{j \in I(\nabla u(x))}u_{x_j}^2u_{x_j,x_j}=\Norm{\nabla u}{\ell^\infty}^2\sum_{j \in I(\nabla u(x))}u_{x_j,x_j}.
\end{equation*}
The same result holds also if we use $\Norm{\nabla u}{\ell^p}$ in place of $\Norm{\nabla u}{\ell^\infty}$, since, by uniform convergence, for $p$ big enough and $j \not \in I(\nabla u(x))$,  we still have $\left|\frac{u_{x_j}}{\Norm{\nabla u}{\ell^p}}\right|<1$.\\
We stress the fact  that the computations above are just heuristics, whose aim is  to obtain an expected form of the limit operator; it  turns out that such heuristics actually lead to the limit operator of the orthotropic $p$-Laplacian. Indeed, the orthotropic $\infty$-Laplacian has been introduced in \cite{belloni2004pseudo} as
\begin{equation*}
	\iL u= \sum_{j \in I(\nabla u(x))}u_{x_j}^2u_{x_j,x_j}.
\end{equation*}
In the same paper the authors prove that this operator is related to the problem of the Absolutely Minimizing Lipschitz Extension with respect to the $\ell^\infty$ on $\R^N$ (as the $\infty$-Laplacian is related to the same problem with respect to the $\ell^2$ norm, as shown in  \cite{aronsson1967extension}). In particular, in \cite{belloni2004pseudo} it is shown that, if $u \in C^2(\Omega) \cap W^{1, \infty}(\Omega)$ is such that for any $D \subset \subset \Omega$ and any $w \in u+W^{1,\infty}_0(\Omega)$ it holds 
\begin{equation*}
	\Norm{\Norm{\nabla u}{\ell^\infty}}{L^\infty(D)}\le \Norm{\Norm{\nabla w}{\ell^\infty}}{L^\infty(D)},
\end{equation*}
then $u$ solves
\begin{equation*}
	-\iL u=0.
\end{equation*}
In the following we will work with a limit problem arising from \eqref{Stkprob} as $p \to +\infty$ that will take into account the operator $\iL$.
\section{Limit eigenvalues}
In this section we study the  behaviour of  the first non-trivial Steklov eigenvalue  as $p \to +\infty$. As we stated in Section \ref{np}, for any $p>1$, we have $\sigma_{1,p}(\Omega)=0$, thus  $\lim_{p \to +\infty}\sigma_{1,p}(\Omega)=0$.  For this reason we focus on $\Sigma_{p}(\Omega)$.\\
In order to  determine $\lim_{p \to +\infty} \Sigma_p(\Omega)$, we first need to fix some notations. For any measurable set $E \subseteq \R^N$, we denote by $V(E)$ the Lebesgue measure of $E$, by $\cH^{N-1}(E)$ the $(N-1)$- dimensional Hausdorff measure of $E$ and we set 
\begin{equation*}
	d_1(x,y)=\Norm{x-y}{\ell^1}, \ x,y \in \R^N.
\end{equation*}
For fixed $x_0$, the function $x \mapsto d_1(x,x_0)$ is such that $\Norm{\nabla d_1(x,x_0)}{\ell^\infty}=1$ almost everywhere, as observed in \cite{chambolle2019existence}. Moreover, let us define the quantity
\begin{equation*}
	\diam_1(E)=\sup_{x,y\in E}d_1(x,y).
\end{equation*}
Now let us recall the variational characterization of $\Sigma_p^p(\Omega)$ given in equation \eqref{varcar} and let us use the following notation:
\begin{align*}
	\cR_p[u]&=\frac{\int_{\Omega}\Norm{\nabla u}{\ell^p}^pdx}{\int_{\partial \Omega}|u|^p\rho_pd\cH^{N-1} },\\
	\cM_p[u]&=\int_{\partial \Omega}|u|^{p-2}u\rho_pd\cH^{N-1} ,\\
	\cU_p&=\left\{u \in W^{1,p}(\Omega): \ \cM_p[u]=0\right\},
\end{align*}
so that we can rewrite $\Sigma_p^p(\Omega)=\min_{u \in \cU_p}\cR_p[u]$. We consider on $L^p(\Omega)$ the norm
\begin{equation*}
	\Norm{u}{L^p(\Omega)}^p=\fint_{\Omega}|u|^pdx=\dfrac{1}{V(\Omega)}\int_{\Omega}|u|^pdx .
\end{equation*}
On $\partial \Omega$, we define the measure $d\cH_p=\rho_p  d\cH^{N-1}$ and  consider for any $p,q \ge 1$
\begin{equation*}
	\Norm{u}{L^p(\partial\Omega, \cH_q)}^p=\frac{1}{\cH^{N-1}(\partial \Omega)}\int_{\partial \Omega} |u|^pd\cH_q.
\end{equation*}
Recall that if $q=2$, then $\rho_2 \equiv 1$ and $\Norm{u}{L^p(\partial \Omega, \cH_2)}=\Norm{u}{L^p(\partial \Omega)}$. From the equivalence of the $\ell^p$ norms on $\R^N$, that, for $p>q\geq 1$, is given by
\begin{equation*}
	\Norm{x}{\ell^p} \le \Norm{x}{\ell^q}\le N^{\frac{1}{q}-\frac{1}{p}}\Norm{x}{\ell^p},
\end{equation*}
we have that, for $p>q\geq 1$,
\begin{equation*}
	\rho_q(x) \le \rho_p(x)\le N^{\frac{1}{p'}-\frac{1}{q'}}\rho_q(x).
\end{equation*}
Moreover, we have,  from Lemma \ref{lemunifconv} and this equivalence, the following result.
\begin{lem}\label{lem41}
	For any $p,q_1,q_2 \ge 1$ we have $u \in L^p(\partial \Omega, \cH_{q_1})$ if and only if $u \in L^p(\partial \Omega, \cH_{q_2})$ and, if $1 \le q_1<q_2 \le +\infty$,
	\begin{equation*}
		\Norm{u}{L^p(\partial \Omega, \cH_{q_1})} \le \Norm{u}{L^p(\partial \Omega, \cH_{q_2})}\le N^{\frac{1}{q_2'}-\frac{1}{q_1'}}\Norm{u}{L^p(\partial \Omega, \cH_{q_1})}(x).
	\end{equation*}
	Moreover, as $q \to +\infty$, we have $\Norm{u}{L^p(\partial \Omega, \cH_q)} \to \Norm{u}{L^p(\partial \Omega, \cH_\infty)}$ and, as $q \to 1$, we have $\Norm{u}{L^p(\partial \Omega, \cH_q)} \to \Norm{u}{L^p(\partial \Omega, \cH_1)}$.
\end{lem}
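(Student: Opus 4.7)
The plan is to reduce everything to the pointwise comparison between the densities $\rho_q$ already recorded in the excerpt just above the statement. Observing that $\nu_{\partial\Omega}(x)\in S^{N-1}$ for $\cH^{N-1}$-a.e.\ $x\in\partial\Omega$, both the equivalence of norms and the limit statements will follow uniformly in $x$ from Lemma \ref{lemunifconv} applied to the compact set $S^{N-1}\subseteq\R^N$.

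First I would use the inequality displayed just above the statement, namely, for $1\le q_1<q_2\le+\infty$,
$$\rho_{q_1}(x)\le\rho_{q_2}(x)\le N^{\frac{1}{q_2'}-\frac{1}{q_1'}}\rho_{q_1}(x),$$
which is simply the pointwise equivalence of the $\ell^{q_1'}$ and $\ell^{q_2'}$ norms applied to the unit vector $\nu_{\partial\Omega}(x)$; note that $q_1<q_2$ gives $q_1'>q_2'$, so the exponent $1/q_2'-1/q_1'$ is nonnegative and the right-most factor is $\ge 1$. Multiplying this chain by $|u(x)|^p$, integrating against $\cH^{N-1}$, dividing by $\cH^{N-1}(\partial\Omega)$ and extracting $p$-th roots immediately produces the sandwich required by the statement (with the slightly sharper constant $N^{(1/q_2'-1/q_1')/p}$, which trivially dominates into $N^{1/q_2'-1/q_1'}$ since the base is $\ge 1$). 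As a byproduct this yields the equivalence $u\in L^p(\partial\Omega,\cH_{q_1})\Leftrightarrow u\in L^p(\partial\Omega,\cH_{q_2})$.

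For the two limiting statements I would invoke Lemma \ref{lemunifconv} with the compact set $K=S^{N-1}$: since $\Norm{\cdot}{\ell^{q'}}$ converges uniformly on $S^{N-1}$ to $\Norm{\cdot}{\ell^1}$ as $q'\to 1^+$ (i.e.\ $q\to+\infty$) and to $\Norm{\cdot}{\ell^\infty}$ as $q'\to+\infty$ (i.e.\ $q\to 1^+$), and since $\nu_{\partial\Omega}$ takes values in $S^{N-1}$, one has $\rho_q\to\rho_\infty$ (resp.\ $\rho_q\to\rho_1$) uniformly on $\partial\Omega$. Combining this uniform convergence with the domination $|u|^p\rho_q\le C|u|^p\rho_{q_0}$ (valid on any closed interval of $q$'s away from the relevant endpoint, by Step~1 with a convenient reference $q_0$), dominated convergence delivers
$$\int_{\partial\Omega}|u|^p\rho_q\,d\cH^{N-1}\longrightarrow\int_{\partial\Omega}|u|^p\rho_\star\,d\cH^{N-1}\qquad(\star\in\{1,\infty\}),$$
and a final $p$-th root closes both limits.

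There is no real analytical obstacle here; the only place requiring care is the bookkeeping of the conjugate exponent $q'$, so that the direction of monotonicity is not inadvertently inverted when passing between $\ell^q$ and $\ell^{q'}$, and so that the uniform convergence provided by Lemma \ref{lemunifconv} is applied with the parameter $q'$ tending to the correct endpoint as $q$ tends to its own.
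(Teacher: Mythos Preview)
Your proposal is correct and follows exactly the approach the paper takes: the paper does not even supply a formal proof, merely recording before the lemma that it follows ``from Lemma \ref{lemunifconv} and this equivalence'' (the displayed pointwise bound on $\rho_q$), and remarking after the lemma that the limit assertions come from the uniform convergence $\rho_q\to\rho_\infty$ (resp.\ $\rho_1$) on the compact sphere $\mathbb{S}^{N-1}$. Your observation that the constant one actually obtains is the sharper $N^{(1/q_2'-1/q_1')/p}$ is a valid refinement; the handling of the conjugate exponents and the dominated-convergence step are both carried out correctly.
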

The latter property is due to the fact that, since $\rho_p(x)=\Norm{\nu_{\partial \Omega}(x)}{\ell^{p'}}$ and $\nu_{\partial \Omega}(x) \in \mathbb{S}^{N-1}$, where $\mathbb{S}^{N-1}$ is the unit sphere of $\R^N$ with respect to the $\ell^2$ norm (that is a compact set), $\rho_p(x) \to \rho_\infty(x)$ uniformly as $p \to +\infty$ and $\rho_p(x) \to \rho_1(x)$ uniformly as $p \to 1$.\\
Let us observe that we can recast $\cR_p[u]$ as
\begin{equation*}
	\cR_p[u]=\frac{\fint_{\Omega}\Norm{\nabla u}{\ell^p}^pdx}{\frac{1}{V(\Omega)}\int_{\partial \Omega}|u|^pd\cH_p}.
\end{equation*}
Moreover, we have the following lower-semicontinuity property.
\begin{lem} \label{lemlim}
	Fix $p\ge 2$ and let $u_n \rightharpoonup u$ in $W^{1,p}(\Omega)$. Then
	\begin{equation*}
		\cR_p[u]\le \liminf_{n \to +\infty}\cR_p[u_n].
	\end{equation*}
\end{lem}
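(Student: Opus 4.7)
The plan is to treat the numerator and denominator of $\cR_p[u]$ separately and then combine them via a standard ratio argument. The integrand $\xi\mapsto\Norm{\xi}{\ell^p}^p$ is convex and continuous on $\R^N$, so the functional $u\mapsto\int_\Omega\Norm{\nabla u}{\ell^p}^p\,dx$ is convex and strongly continuous on $W^{1,p}(\Omega)$; by the Tonelli--Serrin theorem (equivalently, by Mazur's lemma) it is weakly lower-semicontinuous, yielding
\[
\int_\Omega \Norm{\nabla u}{\ell^p}^p\,dx\le \liminf_{n\to+\infty}\int_\Omega \Norm{\nabla u_n}{\ell^p}^p\,dx.
\]

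For the denominator I would invoke compactness of the trace operator. Since $\Omega$ is an open bounded convex subset of $\R^N$, its boundary is Lipschitz, and the trace map $T:W^{1,p}(\Omega)\to L^p(\partial\Omega)$ is compact (the trace is continuous into $W^{1-1/p,p}(\partial\Omega)$, and Rellich--Kondrachov on the compact manifold $\partial\Omega$ provides the compact embedding $W^{1-1/p,p}(\partial\Omega)\hookrightarrow L^p(\partial\Omega)$). Hence $u_n\to u$ strongly in $L^p(\partial\Omega)$, so $|u_n|^p\to|u|^p$ in $L^1(\partial\Omega)$; since $\rho_p$ is continuous and bounded above and below by the equivalence of $\ell^p$ norms on $\R^N$, multiplication by $\rho_p$ preserves the $L^1$-convergence and gives
\[
\int_{\partial\Omega}|u_n|^p\rho_p\,d\cH^{N-1}\to\int_{\partial\Omega}|u|^p\rho_p\,d\cH^{N-1}.
\]

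Assuming the limit denominator is strictly positive, the quantities $\int_{\partial\Omega}|u_n|^p\rho_p\,d\cH^{N-1}$ are eventually uniformly bounded below, and combining the liminf on the numerator with the full limit on the denominator immediately gives $\cR_p[u]\le \liminf_n \cR_p[u_n]$. The only delicate point is the degenerate case when $\int_{\partial\Omega}|u|^p\rho_p\,d\cH^{N-1}=0$: if moreover $\nabla u\equiv 0$ then $u$ is constant with vanishing trace, hence $u\equiv 0$ and the statement is trivial under the convention $\cR_p[0]=0$; otherwise $\cR_p[u]=+\infty$ and one argues by contradiction, since $\cR_p[u_{n_k}]\le C$ along a subsequence would force $\int_\Omega\Norm{\nabla u_{n_k}}{\ell^p}^p\,dx\le C\int_{\partial\Omega}|u_{n_k}|^p\rho_p\,d\cH^{N-1}\to 0$, which by lower-semicontinuity of the numerator contradicts $\nabla u\not\equiv 0$. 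No serious obstacle arises: the proof rests on two classical tools, namely weak lower-semicontinuity of convex integrands and compactness of the trace operator on Lipschitz domains, and only the handling of the degenerate boundary-trace case requires a small amount of care.
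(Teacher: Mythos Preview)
The paper states this lemma without proof, so there is no argument to compare against. Your proof is correct and follows the standard route: weak lower-semicontinuity of the convex gradient functional $u\mapsto\int_\Omega\Norm{\nabla u}{\ell^p}^p\,dx$ handles the numerator, and compactness of the trace operator $W^{1,p}(\Omega)\to L^p(\partial\Omega)$ (valid since convex domains are Lipschitz) yields convergence of the denominator. One minor remark: in the paper's applications of this lemma the functions under consideration are normalized eigenfunctions with $\int_{\partial\Omega}|u_n|^p\rho_p\,d\cH^{N-1}$ bounded away from zero, so the degenerate case you treat at the end never actually arises; your handling of it is fine, but you could simply add the standing hypothesis that $u$ does not have identically vanishing trace.
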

Let us denote with $u_{2,p} \in \cU_p$ a minimizer of $\cR_p$ such that
\begin{equation}\label{eq:normcond}
	\frac{1}{V(\Omega)}\int_{\partial \Omega}|u_{2,p}|^pd\cH_p=1.
\end{equation}
In particular, in such a case,
\begin{equation}\label{eq:varcar2}
	\Sigma_p^p(\Omega)=\fint_{\Omega}\Norm{\nabla u_{2,p}}{\ell^p}^pdx.
\end{equation}
We need the following technical Lemma.
\begin{lem}
	Let $\Omega$ be a bounded open convex subset of $\R^N$ and $u \in W^{1,\infty}(\Omega)$. Then
	\begin{equation}\label{ineqLip}
		|u(x)-u(y)|\le \Norm{\Norm{\nabla u}{\ell^\infty}}{L^\infty}\diam_1(\Omega), \ \forall x,y \in \overline{\Omega}.
	\end{equation}
\end{lem}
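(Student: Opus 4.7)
The plan is to exploit convexity to reduce the estimate to a one-dimensional integration along a segment and then invoke the duality between $\ell^1$ and $\ell^\infty$. Since $u \in W^{1,\infty}(\Omega)$ and $\Omega$ is a bounded open convex set, $u$ admits a Lipschitz (in particular, continuous) representative on $\overline{\Omega}$, which I identify with $u$ itself. In particular the quantities $u(x)$ and $u(y)$ are pointwise well-defined.

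Fix $x,y\in\overline{\Omega}$. By convexity, the segment $\gamma(t)=y+t(x-y)$, $t\in[0,1]$, is entirely contained in $\overline{\Omega}$. Being $u$ Lipschitz, the composition $t\mapsto u(\gamma(t))$ is absolutely continuous on $[0,1]$ (this is the standard ACL characterization of Sobolev/Lipschitz functions along lines), hence the fundamental theorem of calculus yields
\begin{equation*}
u(x)-u(y)=\int_0^1 \langle \nabla u(\gamma(t)),\,x-y\rangle\,dt,
\end{equation*}
where $\nabla u$ is defined almost everywhere by Rademacher's theorem.

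Applying the Hölder-type inequality corresponding to the duality between $\ell^1$ and $\ell^\infty$, namely $|\langle v,w\rangle|\le \Norm{v}{\ell^\infty}\Norm{w}{\ell^1}$ for $v,w\in\R^N$, to the integrand gives
\begin{equation*}
|u(x)-u(y)|\le \int_0^1 \Norm{\nabla u(\gamma(t))}{\ell^\infty}\,\Norm{x-y}{\ell^1}\,dt\le \Norm{\Norm{\nabla u}{\ell^\infty}}{L^\infty}\Norm{x-y}{\ell^1}.
\end{equation*}
Finally, since $\Norm{x-y}{\ell^1}\le \diam_1(\Omega)$ by the very definition of $\diam_1$, the desired inequality \eqref{ineqLip} follows.

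The only mildly delicate point is the justification of the one-dimensional fundamental theorem of calculus along $\gamma$ for a $W^{1,\infty}$ function; this is standard once one passes to the Lipschitz representative on the convex set $\overline{\Omega}$, and there is no real obstacle beyond that bookkeeping.
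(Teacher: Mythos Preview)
Your proof is correct and follows essentially the same route as the paper: integrate along the segment joining $x$ and $y$ (using convexity), apply the $\ell^1$--$\ell^\infty$ duality $|\langle v,w\rangle|\le \Norm{v}{\ell^\infty}\Norm{w}{\ell^1}$ to the integrand, and bound $\Norm{x-y}{\ell^1}$ by $\diam_1(\Omega)$. The only cosmetic difference is that the paper first proves the inequality for $x,y\in\Omega$ and then invokes Morrey's embedding to extend to $\overline{\Omega}$, whereas you handle all points of $\overline{\Omega}$ at once by passing to the Lipschitz representative from the start.
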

\begin{proof}
	Let us recall that, by definition of polar norm, $|\langle x,y \rangle|\le \Norm{x}{\ell^\infty}\Norm{y}{\ell^1}$. Now fix $x,y \in \Omega$ and observe that,  since $\Omega$ is convex,  $(1-t)x+ty \in \Omega$ for any $t \in [0,1]$. Define the function
	\begin{equation*}
		v(t)=u((1-t)x+ty), \ t \in [0,1]
	\end{equation*} 
	and observe that $v \in W^{1, \infty}([0,1])$. Hence, in particular, $v$ is absolutely continuous and 
	\begin{equation*}
		v(0)-v(1)=\int_0^1 v'(t)dt, 
	\end{equation*}
	where $v'$ is the weak derivative. We have
	\begin{equation*}
		u(x)-u(y)=\int_0^1 \langle \nabla u ((1-t)x+ty),y-x\rangle dt
	\end{equation*}
	and then
	\begin{align*}
		|u(x)-u(y)|&\le \int_0^1 |\langle \nabla u ((1-t)x+ty),y-x\rangle| dt\\&\le \int_0^1 \Norm{\nabla u ((1-t)x+ty)}{\ell^\infty}\Norm{x-y}{\ell^1} dt\\&\le \Norm{\Norm{\nabla u}{\ell^\infty}}{L^\infty(\Omega)}\diam_1(\Omega).
	\end{align*}
	Finally, by Morrey's embedding theorem, we know that $u \in C^0(\overline{\Omega})$, thus inequality \eqref{ineqLip} holds also for $x,y \in \partial \Omega$. 
\end{proof}
Now we show the following result.
\begin{prop}\label{prop_lim}
	It holds
	\begin{equation*}
		\lim_{p \to +\infty}\Sigma_p(\Omega)=\frac{2}{\diam_1(\Omega)}=:\Sigma_\infty(\Omega).
	\end{equation*}
\end{prop}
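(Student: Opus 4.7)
The plan is a sandwich argument: show $\limsup_{p\to\infty}\Sigma_p(\Omega)\le 2/\diam_1(\Omega)$ by exhibiting a good test function in the variational characterization \eqref{varcar}, and $\liminf_{p\to\infty}\Sigma_p(\Omega)\ge 2/\diam_1(\Omega)$ by compactness of normalized minimizers combined with the Lipschitz inequality \eqref{ineqLip}.

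\textbf{Upper bound.} Fix $x_0,y_0\in\overline\Omega$ with $d_1(x_0,y_0)=\diam_1(\Omega)$ (they lie on $\partial\Omega$ by convexity) and test the quotient $\cR_p$ against $v_p(x)=d_1(x,x_0)-\alpha_p$, where $\alpha_p$ is the unique scalar rendering $v_p\in\cU_p$. Since $\Norm{\nabla d_1(\cdot,x_0)}{\ell^p}^p=N$ almost everywhere, $\int_\Omega\Norm{\nabla v_p}{\ell^p}^p\,dx=N\,V(\Omega)$, whose $p$-th root tends to $1$. The relation $v_p\in\cU_p$ is equivalent (by strict convexity in $c$) to $\alpha_p$ minimizing $c\mapsto\int_{\partial\Omega}|d_1(\cdot,x_0)-c|^p\rho_p\,d\cH^{N-1}$; passing to the limit $p\to\infty$, the $p$-th root of this functional converges to $c\mapsto\Norm{d_1(\cdot,x_0)-c}{L^\infty(\partial\Omega)}$, whose unique minimizer is the Chebyshev center $(\max_{\partial\Omega}d_1(\cdot,x_0)+\min_{\partial\Omega}d_1(\cdot,x_0))/2=\diam_1(\Omega)/2$. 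Hence $\alpha_p\to\diam_1(\Omega)/2$ and $(\int_{\partial\Omega}|v_p|^p\rho_p\,d\cH^{N-1})^{1/p}\to\diam_1(\Omega)/2$, so that taking $p$-th roots in $\cR_p[v_p]$ gives $\limsup_p\Sigma_p(\Omega)\le 2/\diam_1(\Omega)$.

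\textbf{Compactness and lower bound.} The upper bound just proved feeds into the inequality $\Norm{\nabla u}{\ell^q}\le N^{1/q-1/p}\Norm{\nabla u}{\ell^p}$ and H\"older to yield, for any fixed $q$ and $p>q$,
\begin{equation*}
    \int_\Omega\Norm{\nabla u_{2,p}}{\ell^q}^q\,dx\le N^{1-q/p}V(\Omega)^{1-q/p}\bigl(\Sigma_p^p(\Omega)V(\Omega)\bigr)^{q/p},
\end{equation*}
so $\nabla u_{2,p}$ is uniformly bounded in $L^q(\Omega)$. H\"older applied to the normalization \eqref{eq:normcond} also gives uniform $L^q(\partial\Omega)$ bounds on $u_{2,p}$, whence via a standard Poincar\'e-type trace estimate, uniform $W^{1,q}(\Omega)$ bounds on $u_{2,p}$; for $q>N$, Morrey embedding and Arzel\`a--Ascoli extract a subsequence $u_{2,p_k}\to u_\infty$ uniformly on $\overline\Omega$. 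Passing to the limit in \eqref{eq:normcond} (using $\rho_p\to\rho_\infty$ uniformly and uniform convergence of $u_{2,p_k}$) gives $\Norm{u_\infty}{L^\infty(\partial\Omega)}=1$; splitting the orthogonality $\int_{\partial\Omega}|u_{2,p}|^{p-2}u_{2,p}\rho_p\,d\cH^{N-1}=0$ into positive and negative parts and taking $(p-1)$-th roots delivers $\max_{\partial\Omega}u_\infty=-\min_{\partial\Omega}u_\infty=1$. Picking $x^\pm\in\partial\Omega$ with $u_\infty(x^\pm)=\pm 1$, inequality \eqref{ineqLip} yields
\begin{equation*}
    2=u_\infty(x^+)-u_\infty(x^-)\le\Norm{\Norm{\nabla u_\infty}{\ell^\infty}}{L^\infty(\Omega)}\,\diam_1(\Omega),
\end{equation*}
while weak lower semicontinuity of the $L^q$ norms, combined with the same norm-equivalence chain and $q\to\infty$, gives $\Norm{\Norm{\nabla u_\infty}{\ell^\infty}}{L^\infty(\Omega)}\le\liminf_k\Sigma_{p_k}(\Omega)$. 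This closes the sandwich along the subsequence, and a routine subsequence-extraction argument upgrades the conclusion to the full sequence.

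\textbf{Main obstacle.} The crux is the passage to the limit in the two boundary integrals (normalization and orthogonality): one must show that the signed density $|u_{2,p}|^{p-2}u_{2,p}\rho_p$ concentrates symmetrically on $\{u_\infty=+1\}$ and $\{u_\infty=-1\}$ in the $(p-1)$-th root sense, forcing both extrema of $u_\infty|_{\partial\Omega}$ to have absolute value $1$. The determination $\alpha_p\to\diam_1(\Omega)/2$ in the upper-bound step is a milder manifestation of the same concentration phenomenon.
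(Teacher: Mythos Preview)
Your proposal is correct and follows essentially the same two-step sandwich as the paper's proof: a test-function upper bound using $d_1(\cdot,x_0)$ shifted into $\cU_p$, and a compactness/lower-semicontinuity lower bound culminating in the Lipschitz estimate \eqref{ineqLip} together with the symmetry $\max_{\partial\Omega}u_\infty=-\min_{\partial\Omega}u_\infty$ extracted from the orthogonality constraint.

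The only noteworthy difference is in the upper bound. The paper picks $x_0\in\Omega$ (interior), argues along a subsequence $c_p\to c$, and then takes an infimum over $x_0$ to reach $2/\diam_1(\Omega)$. You instead take $x_0\in\partial\Omega$ realizing the $\ell^1$-diameter; this forces $\min_{\partial\Omega}d_1(\cdot,x_0)=0$ and $\max_{\partial\Omega}d_1(\cdot,x_0)=\diam_1(\Omega)$, so the Chebyshev center of $d_1(\cdot,x_0)|_{\partial\Omega}$ is exactly $\diam_1(\Omega)/2$ and the limit of the denominator is identified in one shot, with no infimum needed. Your route is slightly cleaner here; the paper's is slightly more robust in that it never needs to discuss convergence of minimizers $\alpha_p$ (it only needs \emph{some} cluster point of $c_p$). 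The lower-bound arguments are essentially identical in structure.
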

\begin{proof}
	First of all, let us show that $\limsup_{p \to +\infty}\Sigma_p \le \frac{2}{\diam_1(\Omega)}$. To do this, we consider $x_0 \in \Omega$ and we observe that, being $\Omega$ an open set, $d_1(x,x_0)>0$ for any $x \in \partial \Omega$. Indeed, if $d_1(x,x_0)=0$ for some $x \in \partial \Omega$, being $d_1$ a distance, we should have $x=x_0$ and then $x_0 \in \Omega \cap \partial \Omega= \emptyset$. In particular, this implies that $\mathcal{M}_p[d_1(\cdot,x_0)]>0$.\\
	Define the function $w_p(x)=d_1(x,x_0)-c_p$ where $c_p \in \R$ is chosen in such a way that $w_p\in \cU_p$. Let us recall that $\Norm{\nabla w_p}{\ell^\infty}=1$ almost everywhere in $\Omega \setminus\{x_0\}$, hence we have, by equation \eqref{ineq1},
	\begin{equation*}
		\fint_{\Omega}\Norm{\nabla w_p}{\ell^p}^pdx\le N.
	\end{equation*}
	Moreover, we have, from Lemma \ref{lem41},
	\begin{equation*}
		\Norm{w_p}{L^p(\partial \Omega, \cH_\infty)}\le N^{\frac{1}{p}}\Norm{w_p}{L^p(\partial \Omega, \cH_p)}.
	\end{equation*}
	Thus, recalling that $\Sigma_p(\Omega)\le \cR[w_p]^{\frac{1}{p}}$, we achieve
	\begin{align}\label{Sigmap1}
		\begin{split}
			\Sigma_p(\Omega)&\le \frac{\left(\fint_{\Omega}\Norm{\nabla w_p}{\ell^p}^pdx\right)^{\frac{1}{p}}}{\left(\frac{1}{V(\Omega)}\int_{\partial \Omega}|w_p|^p\rho_{p}(x)d\cH^{N-1}\right)^{\frac{1}{p}}}\\&=\frac{\left(\fint_{\Omega}\Norm{\nabla w_p}{\ell^p}^pdx\right)^{\frac{1}{p}}}{\left(\frac{\cH^{N-1}(\partial \Omega)}{V(\Omega)}\right)^{\frac{1}{p}}\Norm{w_p}{L^p(\partial \Omega, \cH_p)}}\\
			&\le \frac{N^{\frac{1}{p}}}{\left(\frac{\cH^{N-1}(\partial \Omega)}{V(\Omega)}\right)^{\frac{1}{p}}N^{-\frac{1}{p}}\Norm{w_p}{L^p(\partial \Omega, \cH_\infty)}}.
		\end{split}
	\end{align}
	Now let us observe that, since $\cM_p(w_p)=0$, $w_p$ must change sign on $\partial \Omega$. Since $0 \le d_1(x,x_0)\le \diam_1(\Omega)$, we have $c_p \in [0,\diam_1(\Omega)]$. Up to a subsequence, we can suppose $c_p \to c \in [0,\diam_1(\Omega)]$ as $p \to +\infty$ and, setting $w=d_1(x,x_0)-c$,  we have that $w_p \to w$ uniformly. Hence, as $p \to +\infty$, 
	\begin{equation*}
		\Norm{w_p}{L^p(\partial \Omega, \cH_\infty)} \to \sup_{x \in \partial \Omega}|d_1(x,x_0)-c|;
	\end{equation*}
	taking the $\limsup$ as $p\to+\infty$ in \eqref{Sigmap1},  we have
	\begin{equation}\label{eq:limsuppass1}
		\limsup_{p \to +\infty}\Sigma_p(\Omega)\le \frac{1}{\sup_{x \in \partial \Omega}|d_1(x,x_0)-c|}.
	\end{equation}
	Now let us observe that 
	\begin{equation*}
		|d_1(x,x_0)-c|\ge \inf_{c \in [0,\diam_1(\Omega)]}|d_1(x,x_0)-c|=\frac{d_1(x,x_0)}{2},
	\end{equation*}
	thus we get 
	\begin{equation*}
		\sup_{x \in \partial \Omega}|d_1(x,x_0)-c|\ge \frac{\sup_{x \in \partial \Omega}d_1(x,x_0)}{2}.
	\end{equation*}
	Plugging this relation into equation \eqref{eq:limsuppass1},  we achieve
	\begin{equation*}
		\limsup_{p \to +\infty}\Sigma_p(\Omega) \le\frac{2}{\sup_{x \in \partial \Omega}d_1(x,x_0)}.
	\end{equation*}
	Since this inequality holds for any $x_0 \in \Omega$, we can take the infimum as $x_0 \in \Omega$ to  obtain	\begin{equation*}
		\limsup_{p \to +\infty}\Sigma_p(\Omega) \le\frac{2}{\diam_1(\Omega)}.
	\end{equation*}
	Now let us show that $\liminf_{p \to +\infty}\Sigma_p(\Omega) \ge \frac{2}{\diam_1(\Omega)}$. To do this, let us consider $m>N$ and $p>m$. Since $p>2$,  we have
	\begin{equation*}
		\Norm{\nabla u_{2,p}}{\ell^p} \ge N^{\frac{1}{p}-2}\Norm{\nabla u_{2,p}}{\ell^2}, 
	\end{equation*}
	and then, by H\"older inequality  and definition of $u_{2,p}$, 
	\begin{equation*}
		\Sigma_p(\Omega)=\left(\fint_{\Omega}\Norm{\nabla u_{2,p}}{\ell^p}^pdx\right)^{\frac{1}{p}} \ge N^{\frac{1}{p}-2}\left(\fint_{\Omega}\Norm{\nabla u_{2,p}}{\ell^2}^pdx\right)^{\frac{1}{p}}\ge N^{\frac{1}{p}-2}\left(\fint_{\Omega}\Norm{\nabla u_{2,p}}{\ell^2}^mdx\right)^{\frac{1}{m}}.	
	\end{equation*}
	Since $W^{1,m}(\Omega)$ is compactly embedded in $C^0(\overline{\Omega})$, we can suppose (up to a subsequence) that there exists a function $u_{2,\infty} \in C^0(\overline{\Omega})$ such that $u_{2,p} \to u_{2, \infty}$ uniformly on $\overline{\Omega}$ and weakly in $W^{1,m}(\Omega)$. Now let us fix any $1 \le q <p$ and observe that, by lower semicontinuity of the functional
	\begin{equation*}
		u \in W^{1,q}(\Omega) \mapsto (\cH^{N-1}(\partial\Omega))^{\frac{1}{q}}\cR_q[u] \in \R
	\end{equation*}
	with respect to the weak convergence in $W^{1,q}$ as stated in Lemma \ref{lemlim}, we have
	\begin{equation*}
		\frac{\left(\fint_\Omega \Norm{\nabla u_{2,\infty}}{\ell^\infty}^qdx\right)^\frac{1}{q}}{\left(\frac{1}{\cH^{N-1}(\partial\Omega)}\int_{\partial\Omega} |u_{2,\infty}|^q\rho_{\infty}d\cH^{N-1}\right)^\frac{1}{q}}
		\le \liminf_{p \to +\infty}\frac{\left(\fint_\Omega \Norm{\nabla u_{2,p}}{\ell^\infty}^qdx\right)^\frac{1}{q}}{\left(\frac{1}{\cH^{N-1}(\partial\Omega)}\int_{\partial\Omega} |u_{2,p}|^q\rho_{\infty}d\cH^{N-1}\right)^\frac{1}{q}}.
	\end{equation*}
	By H\"older inequality we get 
	\begin{equation*}
		\frac{\left(\fint_\Omega \Norm{\nabla u_{2,\infty}}{\ell^\infty}^qdx\right)^\frac{1}{q}}{\left(\frac{1}{\cH^{N-1}(\partial\Omega)}\int_{\partial\Omega} |u_{2,\infty}|^q\rho_{\infty}d\cH^{N-1}\right)^\frac{1}{q}}
		\le\liminf_{p \to +\infty}\frac{\left(\fint_\Omega \Norm{\nabla u_{2,p}}{\ell^\infty}^pdx\right)^\frac{1}{p}}{\left(\frac{1}{\cH^{N-1}(\partial\Omega)}\int_{\partial\Omega} |u_{2,p}|^q\rho_\infty d\cH^{N-1}\right)^\frac{1}{q}}
	\end{equation*}
	and then, by using \eqref{ineq1},
	\begin{equation*}
		\frac{\left(\fint_\Omega \Norm{\nabla u_{2,\infty}}{\ell^\infty}^qdx\right)^\frac{1}{q}}{\left(\frac{1}{\cH^{N-1}(\partial\Omega)}\int_{\partial\Omega} |u_{2,\infty}|^q\rho_{\infty}d\cH^{N-1}\right)^\frac{1}{q}}
		\le\liminf_{p \to +\infty}\frac{\left(\fint_\Omega \Norm{\nabla u_{2,p}}{\ell^p}^pdx\right)^\frac{1}{p}}{\left(\frac{1}{\cH^{N-1}(\partial\Omega)}\int_{\partial\Omega} |u_{2,p}|^q\rho_\infty d\cH^{N-1}\right)^\frac{1}{q}}.
	\end{equation*}
	Recalling equations \eqref{eq:normcond} and \eqref{eq:varcar2}, we have
	\begin{align*}
		\frac{\left(\fint_\Omega \Norm{\nabla u_{2,\infty}}{\ell^\infty}^qdx\right)^\frac{1}{q}}{\left(\frac{1}{\cH^{N-1}(\partial\Omega)}\int_{\partial\Omega} |u_{2,\infty}|^q\rho_{\infty}d\cH^{N-1}\right)^\frac{1}{q}}
		&\le\liminf_{p \to +\infty}\frac{\left(\frac{1}{V(\Omega)}\int_{\partial\Omega} |u_{2,p}|^p\rho_pd\cH^{N-1}\right)^\frac{1}{p}}{\left(\frac{1}{\cH^{N-1}(\partial\Omega)}\int_{\partial\Omega} |u_{2,p}|^q\rho_\infty d\cH^{N-1}\right)^\frac{1}{q}}\Sigma_p(\Omega)\\
		&=\liminf_{p \to +\infty}\frac{\left(\frac{\cH^{N-1}(\partial\Omega)}{V( \Omega)}\right)^{\frac{1}{p}}\Norm{u_{2,p}}{L^p(\partial \Omega, \cH_p)}}{\Norm{u_{2,p}}{L^q(\partial \Omega, \cH_\infty)}}\Sigma_p(\Omega)
	\end{align*}
	and, by using Lemma \ref{lem41}, we achieve
	\begin{align*}
		\frac{\left(\fint_\Omega \Norm{\nabla u_{2,\infty}}{\ell^\infty}^qdx\right)^\frac{1}{q}}{\left(\frac{1}{\cH^{N-1}(\partial\Omega)}\int_{\partial\Omega} |u_{2,\infty}|^q\rho_{\infty}d\cH^{N-1}\right)^\frac{1}{q}}&\le \liminf_{p \to +\infty}\frac{\left(\frac{\cH^{N-1}(\partial\Omega)}{V( \Omega)}\right)^{\frac{1}{p}}\Norm{u_{2,p}}{L^p(\partial \Omega, \cH_\infty)}}{\Norm{u_{2,p}}{L^q(\partial \Omega, \cH_\infty)}}\Sigma_p(\Omega)\\&=\frac{\Norm{u_{2,\infty}}{L^\infty(\partial \Omega)}}{\Norm{u_{2,\infty}}{L^q(\partial \Omega,\cH_\infty)}}\liminf_{p \to +\infty}\Sigma_p(\Omega);
	\end{align*}
	finally  let us take the limit as $q \to +\infty$ to obtain
	\begin{equation}\label{eq:pass}
		\frac{\Norm{\Norm{\nabla u_{2,\infty}}{\ell^\infty}}{L^\infty(\Omega)}}{\Norm{u_{2,\infty}}{L^\infty(\partial \Omega)}}\le \liminf_{p \to +\infty}\Sigma_p(\Omega).
	\end{equation}
	Now we want to estimate the left-hand side of the previous inequality. To do this, let us recall that, for any $p>1$,
	\begin{equation*}
		\int_{\partial \Omega}|u_{2,p}|^{p-2}u_{2,p}\rho_p d\cH^{N-1}=0
	\end{equation*}
	hence, in particular,
	\begin{equation*}
		\Norm{(u_{2,p})_+}{L^{p-1}(\partial \Omega, \cH_p)}=\Norm{(u_{2,p})_-}{L^{p-1}(\partial \Omega, \cH_p)}.
	\end{equation*}
	By using the previous identity  we have
	\begin{align}\label{pass1}
		\begin{split}
			0 &\le \left|\Norm{(u_{2,\infty})_+}{L^{p-1}(\partial \Omega, \cH_p)}-\Norm{(u_{2,\infty})_-}{L^{p-1}(\partial \Omega, \cH_p)}\right|\\
			&\le \left|\Norm{(u_{2,\infty})_+}{L^{p-1}(\partial \Omega, \cH_p)}-\Norm{(u_{2,p})_+}{L^{p-1}(\partial \Omega, \cH_p)}\right|\\&+\left|\Norm{(u_{2,p})_-}{L^{p-1}(\partial \Omega, \cH_p)}-\Norm{(u_{2,\infty})_-}{L^{p-1}(\partial \Omega, \cH_p)}\right|\\
			&\le \Norm{(u_{2,\infty})_+-(u_{2,p})_+}{L^{p-1}(\partial \Omega, \cH_p)}+\Norm{(u_{2,\infty})_--(u_{2,p})_-}{L^{p-1}(\partial \Omega, \cH_p)}\\
			&\le \Norm{(u_{2,\infty})_+-(u_{2,p})_+}{L^{p-1}(\partial \Omega, \cH_\infty)}+\Norm{(u_{2,\infty})_--(u_{2,p})_-}{L^{p-1}(\partial \Omega, \cH_\infty)}\\
			&\le \left(\frac{\cH_\infty(\partial \Omega)}{\cH^{N-1}(\partial \Omega)}\right)^{\frac{1}{p-1}}\left(\Norm{(u_{2,\infty})_+-(u_{2,p})_+}{L^{\infty}(\partial \Omega)}+\Norm{(u_{2,\infty})_--(u_{2,p})_-}{L^{\infty}(\partial \Omega)}\right).
		\end{split}
	\end{align}
	Now let us observe that
	\begin{equation*}
		N^{-\frac{1}{p}}\Norm{(u_{2,\infty})_\pm}{L^{p-1}(\partial \Omega, \cH_\infty)}\le \Norm{(u_{2,\infty})_{\pm}}{L^{p-1}(\partial \Omega, \cH_p)}\le \Norm{(u_{2,\infty})_\pm}{L^{p-1}(\partial \Omega, \cH_\infty)}
	\end{equation*}
	and $\lim_{p \to +\infty}\Norm{(u_{2,\infty})_\pm}{L^{p-1}(\partial \Omega, \cH_\infty)}=\Norm{(u_{2,\infty})_\pm}{L^{\infty}(\partial \Omega)}$, thus we have
	\begin{equation*}
		\lim_{p \to +\infty}\Norm{(u_{2,\infty})_\pm}{L^{p-1}(\partial \Omega, \cH_p)}=\Norm{(u_{2,\infty})_\pm}{L^{\infty}(\partial \Omega)}.
	\end{equation*}
	Taking the limit as $p \to +\infty$ in \eqref{pass1}, by also using the uniform convergence of $u_{2,p}$ towards $u_{2,\infty}$ on $\partial \Omega$, we obtain
	\begin{equation*}
		0 \le \left|\Norm{(u_{2,\infty})_+}{L^{\infty}(\partial \Omega)}-\Norm{(u_{2,\infty})_-}{L^{\infty}(\partial \Omega)}\right|\le 0;
	\end{equation*}
	thus, being $u_{2,\infty}\in C^0(\overline{\Omega})$,
	\begin{equation*}
		\max_{x \in \partial \Omega}u_{2,\infty}(x)=-\min_{x \in \partial \Omega}u_{2,\infty}(x).
	\end{equation*}
	Let us consider $x_M,x_m \in \partial \Omega$ respectively a maximum and minimum point of $u_{2,\infty}$ on $\partial \Omega$ and observe that $u_{2,\infty}(x_M)=-u_{2,\infty}(x_m)$. This means that $x_M$ and $x_m$ are both maximum points for $|u_{2,\infty}|$ on $\partial \Omega$ and
	\begin{equation*}
		2\Norm{u_{2,\infty}}{L^\infty(\partial \Omega)}=u_{2,\infty}(x_M)-u_{2,\infty}(x_m).
	\end{equation*}
	By \eqref{ineqLip}, we obtain
	\begin{equation*}
		\Norm{u_{2,\infty}}{L^\infty(\partial \Omega)}=\frac{u_{2,\infty}(x_M)-u_{2,\infty}(x_m)}{2}\le \frac{\diam_1(\Omega)}{2}\Norm{\Norm{\nabla u}{\ell^\infty}}{L^\infty(\Omega)}.
	\end{equation*}
	Plugging last inequality in equation \eqref{eq:pass},  we obtain
	\begin{equation*}
		\frac{2}{\diam_1(\Omega)}\le \liminf_{p \to +\infty}\Sigma_p(\Omega), 
	\end{equation*}
	concluding the proof.
\end{proof}
By using the function $u_{2,\infty}$ defined in the previous proof, we can also exploit the behaviour of $\Sigma_\infty(\Omega)$ as a minimizer of a Rayleigh quotient.
\begin{prop}
	It holds
	\begin{equation*}
		\Sigma_\infty(\Omega)=\min\left\{\frac{\Norm{\Norm{\nabla u}{\ell^{\infty}}}{L^\infty(\Omega)}}{\Norm{u}{L^\infty(\partial \Omega)}}, \ u \in W^{1,\infty}(\Omega), \ \max_{x \in \partial \Omega} u(x)=-\min_{x \in \partial \Omega} u(x)\not = 0\right\}.
	\end{equation*}
\end{prop}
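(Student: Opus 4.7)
The plan is to establish this variational characterization by proving two matching inequalities, leveraging the machinery already developed in the proof of Proposition \ref{prop_lim}. Namely, I would denote by $\cI_\infty(\Omega)$ the infimum on the right-hand side and prove $\Sigma_\infty(\Omega) \ge \cI_\infty(\Omega)$ by exhibiting the explicit admissible competitor $u_{2,\infty}$, and $\Sigma_\infty(\Omega) \le \cI_\infty(\Omega)$ by a direct application of the Lipschitz-type bound \eqref{ineqLip}.

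For the inequality $\cI_\infty(\Omega) \ge \Sigma_\infty(\Omega) = \tfrac{2}{\diam_1(\Omega)}$, I would fix any admissible function $u \in W^{1,\infty}(\Omega)$ satisfying $\max_{\partial\Omega} u = -\min_{\partial\Omega} u =: M \ne 0$. Choosing $x_M, x_m \in \partial\Omega$ with $u(x_M) = M$ and $u(x_m) = -M$, the admissibility condition forces $\Norm{u}{L^\infty(\partial\Omega)} = M$, so
\begin{equation*}
2\Norm{u}{L^\infty(\partial\Omega)} = u(x_M) - u(x_m) \le \Norm{\Norm{\nabla u}{\ell^\infty}}{L^\infty(\Omega)}\diam_1(\Omega),
\end{equation*}
by \eqref{ineqLip}. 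Dividing gives $\frac{\Norm{\Norm{\nabla u}{\ell^\infty}}{L^\infty(\Omega)}}{\Norm{u}{L^\infty(\partial\Omega)}} \ge \frac{2}{\diam_1(\Omega)} = \Sigma_\infty(\Omega)$, and taking the infimum over admissible $u$ yields the claim.

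For the reverse inequality $\cI_\infty(\Omega) \le \Sigma_\infty(\Omega)$, I would use the limit function $u_{2,\infty}$ constructed in the proof of Proposition \ref{prop_lim} as a test function. Three facts must be verified. First, $u_{2,\infty} \in W^{1,\infty}(\Omega)$: passing to the liminf as $p \to +\infty$ in the inequality $\bigl(\fint_\Omega \Norm{\nabla u_{2,p}}{\ell^\infty}^m dx\bigr)^{1/m} \le \Sigma_p(\Omega)$ for arbitrary $m \ge 1$, and then letting $m \to +\infty$, one obtains $\Norm{\Norm{\nabla u_{2,\infty}}{\ell^\infty}}{L^\infty(\Omega)} \le \Sigma_\infty(\Omega) < +\infty$. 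Second, $u_{2,\infty}$ satisfies $\max_{\partial\Omega} u_{2,\infty} = -\min_{\partial\Omega} u_{2,\infty}$: this was already shown in the proof of Proposition \ref{prop_lim} via the estimate \eqref{pass1}. Third, this common value is nonzero: from the normalization $\int_{\partial\Omega} |u_{2,p}|^p d\cH_p = V(\Omega)$ one obtains $\Norm{u_{2,p}}{L^p(\partial\Omega,\cH_p)} = (V(\Omega)/\cH^{N-1}(\partial\Omega))^{1/p}$, which tends to $1$, and by uniform convergence of $u_{2,p}$ to $u_{2,\infty}$ on $\partial\Omega$ together with Lemma \ref{lem41}, this limit equals $\Norm{u_{2,\infty}}{L^\infty(\partial\Omega)}$. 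Hence $u_{2,\infty}$ is admissible, and from \eqref{eq:pass} its Rayleigh quotient is at most $\Sigma_\infty(\Omega)$, giving $\cI_\infty(\Omega) \le \Sigma_\infty(\Omega)$.

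The main subtlety is ensuring $u_{2,\infty}$ is genuinely admissible, in particular that its $L^\infty(\partial\Omega)$-norm is nonzero; this is where the specific normalization \eqref{eq:normcond} on $u_{2,p}$ becomes essential, as it prevents the limit from degenerating. Everything else is a straightforward combination of \eqref{ineqLip} with the limiting computations already performed for Proposition \ref{prop_lim}, and in particular the minimum is attained precisely by $u_{2,\infty}$.
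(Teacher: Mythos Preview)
Your proof is correct, and for the direction $\cI_\infty(\Omega)\le\Sigma_\infty(\Omega)$ it coincides with the paper's argument (both exhibit $u_{2,\infty}$ as a minimizer; you are in fact more careful than the paper in verifying $\Norm{u_{2,\infty}}{L^\infty(\partial\Omega)}\neq 0$ via the normalization \eqref{eq:normcond}).

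For the other direction, however, you take a genuinely different and more elementary route. The paper fixes an admissible $u$, shifts it by constants $c_n$ so that $u+c_n\in\cU_{p_n}$, shows $c_n\to 0$, and then passes to the limit in the inequality $\Sigma_{p_n}(\Omega)\le\cR_{p_n}[u+c_n]^{1/p_n}$ to conclude $\Sigma_\infty(\Omega)\le\cR_\infty[u]$. You bypass this approximation entirely: knowing already that $\Sigma_\infty(\Omega)=2/\diam_1(\Omega)$, you simply apply the Lipschitz bound \eqref{ineqLip} to any admissible $u$ at its boundary extremal points. Your argument is shorter and avoids the subsequence extraction and the $c_n\to 0$ step; the paper's argument, on the other hand, is more ``intrinsic'' in that it shows directly that the finite-$p$ variational characterization \eqref{varcar} passes to the limit, without ever invoking the explicit value $2/\diam_1(\Omega)$. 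Both approaches are valid; yours exploits Proposition~\ref{prop_lim} more fully, while the paper's would survive even if one did not yet know the geometric value of $\Sigma_\infty(\Omega)$.
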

\begin{proof}
	Let us consider $u \in W^{1,\infty}(\Omega)$ such that $$u_M:=\max_{x \in \partial \Omega}u(x)=-\min_{x \in \partial \Omega}u(x)=:-u_m.$$ Then, being $\Omega$ an open bounded convex set, we know that $u \in W^{1,p}(\Omega)$ for any $p \ge 1$. Now let us consider $p_n \to +\infty$ as $n \to +\infty$. For each $n \in \N$, let us define $c_n$ such that
	\begin{equation}\label{condp}
		\int_{\partial \Omega}|u+c_n|^{p_n-2}(u+c_n)\rho_{p_n} d\cH^{N-1}=0.
	\end{equation}
	Since $u_M=-u_m$, we know that $u$ changes sign. Moreover, also $u+c_n$ must change sign for any $n \in \N$, hence we have that $c_n \in [-u_M,u_M]$. Let us then consider a subsequence, that we still   call  $c_n$,  such that $c_n \to c \in [-u_M,u_M]$ and let us define $u_n=u+c_n$; it is easy to check that $u_n \to u$ in $C^0(\overline{\Omega})$. By equation \eqref{condp} we have
	\begin{equation*}
		\Norm{(u+c_n)_+}{L^{p_n-1}(\partial \Omega,\cH_{p_n})}= \Norm{(u+c_n)_-}{L^{p_n-1}(\partial \Omega,\cH_{p_n})}
	\end{equation*}
	and then, taking the limit as $n \to +\infty$, by uniform convergence, we have
	\begin{equation*}
		u_M+c=\max_{x \in \partial \Omega}(u+c)=-\min_{x \in \partial \Omega}(u+c)=-u_m-c, 
	\end{equation*}
	and, since $u_M=-u_m$, $c=0$.\\
	Now let us observe that $u_n \in \cU_p$, thus, by definition of $\Sigma_{p_n}(\Omega)$ and recalling that $\nabla u_n=\nabla u$, we achieve,
	\begin{equation*}
		\Sigma_{p_n}(\Omega) \le \frac{|\Omega|^{\frac{1}{p_n}}\left(\fint_{\Omega}\Norm{\nabla u}{\ell^{p_n}}^{p_n}dx\right)^{\frac{1}{p_n}}}{\cH_{p_n}(\partial \Omega)^{\frac{1}{p_n}}\left(\fint_{\partial\Omega}|u_n|^{p_n}\rho_{p_n}(x)d\cH^{N-1}\right)^{\frac{1}{p_n}}}.
	\end{equation*}
	Since $u_n$ converges uniformly towards $u$, we have, by taking the limit as $n \to +\infty$,
	\begin{equation*}
		\Sigma_{\infty}(\Omega) \le \frac{\Norm{\Norm{\nabla u}{\ell^{\infty}}}{L^\infty(\Omega)}}{\Norm{u}{L^\infty(\partial \Omega)}}.
	\end{equation*}
	By arbitrary of $u$, we have
	\begin{equation*}
		\Sigma_{\infty}(\Omega) \le \inf\left\{\frac{\Norm{\Norm{\nabla u}{\ell^{\infty}}}{L^\infty(\Omega)}}{\Norm{u}{L^\infty(\partial \Omega)}}, \ u \in W^{1,\infty}(\Omega), \ \max_{x \in \partial \Omega} u(x)=-\min_{x \in \partial \Omega} u(x)\not = 0\right\}.
	\end{equation*}
	Finally, let us observe that $u_{2,\infty} \in W^{1,\infty}(\Omega)$ and  $\max_{x \in \partial \Omega} u_{2,\infty}(x)=-\min_{x \in \partial \Omega} u_{2,\infty}(x)\not = 0$, hence
	\begin{equation*}
		\frac{\Norm{\Norm{\nabla u_{2,\infty}}{\ell^{\infty}}}{L^\infty(\Omega)}}{\Norm{u_{2,\infty}}{L^\infty(\partial \Omega)}}\ge \Sigma_{\infty}(\Omega).
	\end{equation*}
	However, we also  have 
	\begin{equation*}
		\frac{\Norm{\Norm{\nabla u_{2,\infty}}{\ell^{\infty}}}{L^\infty(\Omega)}}{\Norm{u_{2,\infty}}{L^\infty(\partial \Omega)}}\le \frac{2}{\diam_1(\Omega)}= \Sigma_\infty(\Omega), 
	\end{equation*}
	concluding the proof.
\end{proof}
\begin{rmk}
	Let us observe that, defining $$\mathcal{R}_\infty[u]=\frac{\Norm{\Norm{\nabla u}{\ell^{\infty}}}{L^\infty(\Omega)}}{\Norm{u}{L^\infty(\partial \Omega)}}$$ for $u \in W^{1,\infty}(\Omega)$ with $u \not \equiv 0$ on $\partial \Omega$, the function $u_{2,\infty}$ is a minimizer of $\mathcal{R}_\infty$ in $$\mathcal{U}_\infty=\left\{u \in W^{1,\infty}(\Omega), \ \max_{x \in \partial \Omega}u(x)=-\min_{x \in \partial \Omega}u(x)\not = 0\right\}.$$\\
	Moreover, the previous Proposition also implies that, for any $u \in \mathcal{U}_\infty$, it holds
	\begin{equation*}
		\Norm{u}{L^\infty(\partial \Omega)}\le \frac{1}{\Sigma_\infty(\Omega)}\Norm{\Norm{\nabla u}{\ell^{\infty}}}{L^\infty(\Omega)},
	\end{equation*}
	thus $1/\Sigma_\infty(\Omega)$ represents the best constant of a trace-type inequality in $\mathcal{U}_\infty$.
\end{rmk}
Next step is to characterize $u_{2,\infty}$ as a solution (in the viscosity sense) of a boundary-value problem involving the orthotropic $\infty$-Laplacian as $\Omega$ is regular enough.
\begin{thm}\label{viscosity_solution}
	Let $\Omega$ be an open set with $C^1$ boundary. Then $u_{2,\infty}$ is a viscosity solution of 
	\begin{equation*}
		\begin{cases}
			-\widetilde{\Delta}_{\infty} u_{2,\infty}=0 & \mbox{on } \Omega\\
			\Lambda(x,u,\nabla u)=0 & \mbox{on }\partial \Omega,
		\end{cases}
	\end{equation*}
	where, for $(x,u,\eta) \in \partial \Omega \times \R \times \R^N$,
	\begin{equation*}
		\Lambda(x,u,\eta)=\begin{cases}
			\min\big\{  \Norm{\eta}{\ell^\infty}  -\Sigma_{\infty}(\Omega)|u|\;, \;\sum_{j \in I(\eta)}\eta_{j}\nu^j_{\partial\Omega}(x)  \big \} & \mbox{if } u>0\\
			\max\big\{ \Sigma_{\infty}(\Omega)|u|- \Norm{\eta}{\ell^\infty}, \sum_{j \in I(\eta)}\eta_{j}\nu^j_{\partial\Omega}(x)  \big \} & \mbox{if } u<0\\
			\sum_{j \in I(\eta)}\eta_{j}\nu^j_{\partial\Omega}(x) & \mbox{if } u=0
		\end{cases}
	\end{equation*}
\end{thm}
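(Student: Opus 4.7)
The plan is to transfer the viscosity property from finite $p$ to the limit $p\to+\infty$, following the strategy of \cite{garcia2006steklov} for the standard $\infty$-Laplacian and using the heuristic limit of Section 4 for the orthotropic operator. By Proposition \ref{weak_is_viscosity} and the subsequent Remark, for every $p>N$ the eigenfunction $u_{2,p}$ is continuous on $\overline\Omega$ and is a viscosity solution of \eqref{Stekrev} with $\sigma=\Sigma_p^p(\Omega)$. Along the subsequence produced in Proposition \ref{prop_lim}, $u_{2,p_i}\to u_{2,\infty}$ uniformly on $\overline\Omega$. If $\Phi\in C^2(\overline\Omega)$ touches $u_{2,\infty}$ from below at $x_0$, the strictness of the touching together with uniform convergence provides points $x_i\to x_0$ and constants $c_i\to 0$ such that $\Phi+c_i$ touches $u_{2,p_i}$ from below at $x_i$. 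This is enough because the viscosity property is local.

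For the interior equation, at $x_i\in\Omega$ we have $F_{p_i}(\nabla\Phi(x_i),\nabla^2\Phi(x_i))\ge 0$, i.e.\ $\sum_j |\Phi_{x_j}(x_i)|^{p_i-2}\Phi_{x_j,x_j}(x_i)\le 0$. After reducing, by a small linear perturbation if needed, to the case $\nabla\Phi(x_0)\neq 0$, we divide by $(p_i-1)\|\nabla\Phi(x_i)\|_{\ell^\infty}^{p_i-4}$ and pass to the limit exactly as in the heuristic derivation of Section 4: indices $j\notin I(\nabla\Phi(x_0))$ contribute $0$ since the ratio $|\Phi_{x_j}(x_i)|/\|\nabla\Phi(x_i)\|_{\ell^\infty}$ stays bounded away from $1$, while indices $j\in I(\nabla\Phi(x_0))$ give the terms $\Phi_{x_j}^2(x_0)\Phi_{x_j,x_j}(x_0)$. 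We obtain $-\widetilde{\Delta}_\infty\Phi(x_0)\ge 0$.

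For the boundary condition at $x_0\in\partial\Omega$, if $F_{p_i}\ge 0$ along a subsequence the interior argument already suffices. Otherwise $B_{p_i}(\Sigma_{p_i}^{p_i},x_i,\Phi(x_i)+c_i,\nabla\Phi(x_i))\ge 0$, and we split according to the sign of $\Phi(x_0)=u_{2,\infty}(x_0)$. When $\Phi(x_0)>0$, bounding $\sum_j|\Phi_{x_j}|^{p_i-2}\Phi_{x_j}\nu_{\partial\Omega}^j\le\|\nu_{\partial\Omega}\|_{\ell^1}\|\nabla\Phi\|_{\ell^\infty}^{p_i-1}$ and taking $(p_i-1)$-th roots, using $\Sigma_{p_i}^{p_i/(p_i-1)}\to\Sigma_\infty$ and $\rho_{p_i}^{1/(p_i-1)}\to 1$, yields $\|\nabla\Phi(x_0)\|_{\ell^\infty}\ge\Sigma_\infty(\Omega)\Phi(x_0)$. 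Dividing $B_{p_i}\ge 0$ instead by $\|\nabla\Phi(x_i)\|_{\ell^\infty}^{p_i-1}$, the right-hand side $\rho_{p_i}\Sigma_{p_i}(\Sigma_{p_i}\Phi/\|\nabla\Phi\|_{\ell^\infty})^{p_i-1}$ tends to a non-negative limit (which is $0$ when the previous inequality is strict) while the left-hand side converges to $\sum_{j\in I(\nabla\Phi(x_0))}\sign(\Phi_{x_j}(x_0))\nu_{\partial\Omega}^j(x_0)$, giving $\sum_{j\in I}\Phi_{x_j}(x_0)\nu_{\partial\Omega}^j(x_0)\ge 0$. Both inequalities together mean $\Lambda(x_0,\Phi(x_0),\nabla\Phi(x_0))\ge 0$. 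The case $\Phi(x_0)<0$ is symmetric: either $\Sigma_\infty|\Phi(x_0)|\ge\|\nabla\Phi(x_0)\|_{\ell^\infty}$ holds directly (the first entry of the $\max$), or the normalized right-hand side vanishes in the limit and one recovers the flux inequality. When $\Phi(x_0)=0$ only the flux condition survives since the eigenvalue term tends to $0$ after normalization. The subsolution part follows by the analogous argument with test functions touching from above, which flips signs and interchanges the roles of $\min$ and $\max$ in $\Lambda$.

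The main obstacle is the passage to the limit of the factors $|\Phi_{x_j}(x_i)/\|\nabla\Phi(x_i)\|_{\ell^\infty}|^{p_i-2}$ for indices $j\in I(\nabla\Phi(x_0))$: the base tends to $1$ from below while the exponent diverges, so the product is a priori indeterminate. This is resolved by passing to a further subsequence on which the set $I(\nabla\Phi(x_i))$ stabilizes to some $I_*\subseteq I(\nabla\Phi(x_0))$, and by coupling the convergences $x_i\to x_0$ and $p_i\to+\infty$ so that the ratios at indices of $I_*$ approach $1$ fast enough. The indices in $I(\nabla\Phi(x_0))\setminus I_*$ contribute only additional non-negative terms to the limiting inequalities by continuity of $\nabla\Phi$, so the direction of the viscosity inequalities is preserved.
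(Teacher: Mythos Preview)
Your overall strategy coincides with the paper's: pass from the viscosity property of $u_{2,p_i}$ to that of $u_{2,\infty}$ via uniform convergence, pick nearby touching points $x_i\to x_0$, normalize by the appropriate power of $\Norm{\nabla\Phi(x_i)}{\ell^\infty}$, and let $p_i\to+\infty$. The paper carries out exactly these steps, splitting the sum over $j\in I(\nabla\Phi(x_i))$ versus its complement and then ``passing to the limit'' to obtain the inequalities $-\iL\Phi(x_0)\lessgtr 0$ and the boundary relations. At that level your sketch is faithful.

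Where you go further is in your last paragraph: you correctly flag a genuine difficulty that the paper does not discuss. When $I(\nabla\Phi(x_0))$ is not a singleton, the ratios $|\Phi_{x_j}(x_i)|/\Norm{\nabla\Phi(x_i)}{\ell^\infty}$ for $j\in I(\nabla\Phi(x_0))\setminus I(\nabla\Phi(x_i))$ tend to $1$ while the exponent $p_i-4$ tends to $+\infty$, so the limit of each such term is a priori any number in $[0,1]$ times $\Phi_{x_j}^2(x_0)\Phi_{x_jx_j}(x_0)$ (interior) or $\Phi_{x_j}(x_0)\nu^j_{\partial\Omega}(x_0)$ (boundary). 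The paper simply asserts the limit equals the sum over $I(\nabla\Phi(x_0))$, which is not justified.

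However, your proposed resolution does not close this gap. First, for $j\in I_*=I(\nabla\Phi(x_i))$ along the stabilized subsequence the ratio is \emph{exactly} $1$, so there is nothing to ``approach fast enough''; the indeterminacy sits at the indices in $I(\nabla\Phi(x_0))\setminus I_*$. Second, you cannot ``couple the convergences $x_i\to x_0$ and $p_i\to+\infty$'': the point $x_i$ is the location of the extremum of $u_{2,p_i}-\Phi$ and is dictated by the data, not chosen by you. Third, and most importantly, your claim that the indices in $I(\nabla\Phi(x_0))\setminus I_*$ ``contribute only additional non-negative terms'' is false. In the interior case the residual term has the sign of $\Phi_{x_jx_j}(x_0)$, and at the boundary it has the sign of $\Phi_{x_j}(x_0)\nu^j_{\partial\Omega}(x_0)$; neither has a prescribed sign, so the direction of the inequality is \emph{not} automatically preserved. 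Thus after passing to a subsequence one only obtains
\[
\sum_{j\in I(\nabla\Phi(x_0))}\alpha_j\,\Phi_{x_j}^2(x_0)\Phi_{x_jx_j}(x_0)\ \ge\ 0,\qquad \alpha_j\in[0,1],\ \alpha_j=1\text{ for }j\in I_*,
\]
which is strictly weaker than $-\iL\Phi(x_0)\ge 0$, and analogously at the boundary. In short: you have identified a real lacuna shared by the paper's argument, but the fix you propose does not work; the discontinuity of $(\xi,X)\mapsto -\sum_{j\in I(\xi)}\xi_j^2 X_{jj}$ in $\xi$ is exactly what obstructs the naive passage to the limit, and one needs a more careful treatment (e.g.\ via semicontinuous envelopes of the operator, or a perturbation argument that genuinely forces $I(\nabla\Phi)$ to be a singleton in a stable way) to make this step rigorous.
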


\begin{proof}
	First of all, we prove  that $-\widetilde{\Delta}_{\infty} u_{2,\infty}=0$  in the viscosity sense in $\Omega$. In order to do that, let us take  a test function  $\Phi$ touching $u$ from above in $x_0\in\Omega$. In the  proof of Proposition \ref{prop_lim}, we have shown that the sequence $u_{2,p_i}$ converges uniformly to $u_{2,\infty}$; it follows that $u_{2,p_i}-\Phi$ has a maximum at some point $x_i\in \Omega$ with $x_i\to x_0$.
	In Proposition \ref{weak_is_viscosity} it is proven that $u_{2,p_i}$ is a viscosity solution of $-\widetilde{\Delta}_{p_i} u_{2,p_i}=0$, so we obtain that
	\begin{equation*}
		-(p_i-1)\sum_{j=1}^{N}|\Phi_{x_j}|^{p_i-4}\Phi^2_{x_j}\Phi_{x_jx_j}\leq 0,
	\end{equation*}
	
	that can be rewritten as
	\begin{align*}\label{forma_comoda}
		&-(p_i-1)\left[\Norm{\nabla \Phi}{\ell^\infty}^{p_i-4}\sum_{j \in I(\nabla \Phi(x_i))}\Phi_{x_j}^2(x_i)\Phi_{x_j,x_j}(x_i)\right.\\&\left.+\sum_{j \not \in I(\nabla \Phi(x_i))}\left|\Phi_{x_j}(x_i)\right|^{p_i-4}\Phi_{x_j}(x_i)^2\Phi_{x_j,x_j}(x_i)\right]\leq 0.
	\end{align*}
	Dividing by $(p_i-1)\Norm{\nabla \Phi}{\ell^\infty}^{p_i-4}$ and passing to the limit, we obtain that $-\widetilde{\Delta}_{\infty} \Phi(x_0)\leq0$.
	Working in the same way, if $\Phi$ is touching $u$ from below in $x_0\in \Omega$, we achieve $-\widetilde{\Delta}_{\infty} \Phi(x_0)\geq0$ and then $-\widetilde{\Delta}_\infty u_{2,\infty}=0$ in the viscosity sense in $\Omega$.\\
	
	Now we deal with the boundary conditions. Let us consider $x_0\in\partial\Omega$ and $u(x_0)>0$.
	Let assume that $\Phi$  touches $u$ from below in $x_0$. Since $u_{p_i}$ converges uniformly to $u_{2,\infty}$, we have that $u_{p_i}-\Phi$ admits a minimum in some point $x_i\in\overline{\Omega}$, with $x_i\to x_0$. If $x_i\in\Omega$ for infinitely many $i$, we already have $-\widetilde{\Delta}_{\infty} \Phi(x_0)\geq 0$. 
	So, we study the case $x_i\in\partial \Omega$ ultimately for any $i$.

	If $\nabla \Phi(x_0)=0$, then $\frac{\partial\Phi}{\partial\nu}(x_0)=0$. Let  now $\nabla\Phi(x_0)\neq 0$; we have that
	\begin{equation*}
		\sum_{j=1}^{N}\left|\Phi_{x_j}(x_i)\right|^{p_i-2} 
		\Phi_{x_j}(x_i)\;\nu^j_{\partial\Omega}(x_i)\geq \Sigma_{p_i}^{p_i}(\Omega)\; |\Phi(x_i)|^{p_i-2}\Phi(x_i)\rho_{p_i}(x_i),
	\end{equation*}
	and, dividing by $\Norm{\nabla\Phi(x_i)}{\ell^\infty}^{p_i-2}$,
	
	\begin{equation}\label{right_left }
		\sum_{j=1}^{N}\left| \frac{\Phi_{x_j}(x_i)}{\Norm{\nabla\Phi(x_i)}{\ell^\infty}} \right|^{p_i-2}\Phi_{x_j}(x_i)\;\nu^j_{\partial\Omega}(x_i)\geq \Sigma_{p_i}^{p_i/(p_i-1)}(\Omega)\left| \dfrac{\Sigma_{p_i}^{p_i/(p_i-1)}(\Omega)\Phi(x_i)}{\Norm{\nabla\Phi(x_i)}{\ell^\infty}}\right|^{p_i-2}\Phi(x_i)\rho_{p_i}(x_i).
	\end{equation}
	Passing to the limit in the left-hand side, we have
	\begin{equation*}
		\lim\limits_{i \to +\infty}\sum_{j=1}^{N}\left| \frac{\Phi_{x_j}(x_i)}{\Norm{\nabla\Phi(x_i)}{\ell^\infty}} \right|^{p_i-2}\Phi_{x_j}(x_i)\;\nu^j_{\partial\Omega}(x_i)=\sum_{j \in I(\nabla \Phi(x_0))}\Phi_{x_j}(x_0)\nu^j_{\partial\Omega}(x_0).
	\end{equation*}
	From this we can deduce that the limit superior of the right-hand side in \eqref{right_left } is finite. Since $$\frac{\Sigma_{p_i}^{p_i/(p_i-1)}(\Omega)\Phi(x_i)}{\Norm{\nabla\Phi(x_i)}{\ell^\infty}} \to \frac{\Sigma_\infty(\Omega) |\Phi(x_0)| }{\Norm{\nabla\Phi(x_0)}{\ell^\infty}},$$ to have a finite limit on the right-hand side of \eqref{right_left }, we need
	$$\dfrac{\Sigma_\infty(\Omega) |\Phi(x_0)| }{\Norm{\nabla\Phi(x_0)}{\ell^\infty}} \le 1.$$
	From this last condition we have
	\begin{equation*}\label{condition}
		\Norm{\nabla\Phi(x_0)}{\ell^\infty}\geq \Sigma_\infty(\Omega) |\Phi(x_0)|\geq 0,
	\end{equation*}
	and then, taking the limit in equation \eqref{right_left },
	\begin{equation*}
		\sum_{j \in I(\nabla \Phi(x_0))}\Phi_{x_j}(x_0)\nu^j_{\partial\Omega}(x_0)\geq 0.
	\end{equation*}
	Hence, if $\Phi$ is touching $u$ from below in $x_0$,  we have
	\begin{equation}\label{final_1}
		\max\left\{ \min\left\{  \sum_{j \in I(\nabla \Phi(x_0))}\Phi_{x_j}(x_0)\nu^j_{\partial\Omega}(x_0), \Norm{\nabla\Phi(x_0)}{\ell^\infty}- \Sigma_\infty(\Omega) |\Phi(x_0)| \right\} ,-\widetilde{\Delta}\Phi(x_0) \right\}\geq 0.
	\end{equation}
	Now assume that $\Phi$ is touching $u$ from above in $x_0$. Since $u_{2,p_i}$ converges uniformly to $u_{2,\infty}$, we have that 
	$u_{2,p_i}-\Phi$  admits a maximum in some point $x_i\in\overline{\Omega}$, with $x_i\to x_0$. If $x_i\in\Omega$ for infinitely many $i$, arguing as before, we obtain $-\widetilde{\Delta }\Phi_{2,\infty}(x_0)\leq 0$. 
	If $x_i\in\partial\Omega$ ultimately for any $i$, then 
	\begin{equation*}
		\sum_{j=1}^{N}\left|\Phi_{x_j}(x_i)\right|^{p_i-2} 
		\Phi_{x_j}(x_i)\;\nu^j_{\partial\Omega}(x_i)\leq \Sigma_{p_i}^{p_i}(\Omega)\; |\Phi(x_i)|^{p_i-2}\Phi(x_i)\rho_{p_i}(x_i).
	\end{equation*}
	If $\nabla \Phi(x_0)=0$, then $\frac{\partial\Phi}{\partial\nu}(x_0)=0$; otherwise we obtain 
	\begin{equation*}
		\sum_{j=1}^{N}\left| \frac{\Phi_{x_j}(x_i)}{\Norm{\nabla\Phi(x_i)}{\ell^\infty}} \right|^{p_i-2}\Phi_{x_j}(x_i)\;\nu^j_{\partial\Omega}(x_i)\leq \Sigma_{p_i}^{p_i/(p_i-1)}(\Omega)\left| \dfrac{\Sigma_{p_i}^{p_i/(p_i-1)}(\Omega)\Phi(x_i)}{\Norm{\nabla\Phi(x_i)}{\ell^\infty}}\right|^{p_i-2}\Phi(x_i)\rho_{p_i}(x_i).
	\end{equation*}
	From this last inequality, if $\Sigma_\infty(\Omega) |\Phi(x_0)|<\Norm{\nabla\Phi(x_0)}{\ell^\infty}$, then, taking the limit,
	\begin{equation*}
		\sum_{j \in I(\nabla \Phi(x_0))}\Phi_{x_j}(x_0)\nu^j_{\partial\Omega}(x_0)\leq 0.
	\end{equation*}
	Hence,
	\begin{equation}\label{final_2}
		\min \left\{ \min\left\{  \sum_{j \in I(\nabla \Phi(x_0))}\Phi_{x_j}(x_0)\nu^j_{\partial\Omega}(x_0), \Norm{\nabla\Phi(x_0)}{\ell^\infty}- \Sigma_\infty(\Omega) |\Phi(x_0)| \right\} ,-\widetilde{\Delta}\Phi(x_0) \right\}\leq 0.
	\end{equation}
	
	Now let us suppose $u(x_0)<0$ and
	assume that $
	\Phi$ is touching $u$ from above in $x_0$.  Since $u_{2,p_i}$ converges uniformly to $u_{2,\infty}$, we have that 
	$u_{2,p_i}-\Phi$  admits a maximum at some point $x_i\in\overline{\Omega}$, with $x_i\to x_0$. If $x_i\in\Omega$ for infinitely many $i$, arguing as before, we obtain that $-\widetilde{\Delta }\Phi_{2,\infty}(x_0)\leq 0$. 
	If $x_i\in\partial\Omega$ ultimately for any $i$, then 
	\begin{equation*}
		\sum_{j=1}^{N}\left|\Phi_{x_j}(x_i)\right|^{p_i-2} 
		\Phi_{x_j}(x_i)\;\nu^j_{\partial\Omega}(x_i)\leq \Sigma^{p_i}_{p_i}(\Omega)\; |\Phi(x_i)|^{p_i-2}\Phi(x_i)\rho_{p_i}(x_i).
	\end{equation*}
	If $\nabla \Phi(x_0)=0$, then $\frac{\partial\Phi}{\partial\nu}(x_0)=0$; otherwise we obtain 
	
	\begin{equation}\label{diss}
		\sum_{j=1}^{N}\left| \frac{\Phi_{x_j}(x_i)}{\Norm{\nabla\Phi(x_i)}{\ell^\infty}} \right|^{p_i-2}\Phi_{x_j}(x_i)\;\nu^j_{\partial\Omega}(x_i)\leq \Sigma_{p_i}^{p_i/(p_i-1)}(\Omega)\left| \dfrac{\Sigma_{p_i}^{p_i/(p_i-1)}(\Omega)\Phi(x_i)}{\Norm{\nabla\Phi(x_i)}{\ell^\infty}}\right|^{p_i-2}\Phi(x_i)\rho_{p_i}(x_i).
	\end{equation}
	Now, if we pass to the limit superior on the right hand side, arguing as before and recalling this time that $\Phi(x_0)<0$, we obtain a finite quantity; this implies 
	$$\dfrac{\Sigma_\infty(\Omega) |\Phi(x_0)|}{\nabla\Phi(x_0)} \leq 1.$$
	
	Moreover, taking the limit in \eqref{diss}, since $\Phi(x_0)<0$,
	$$  \sum_{j \in I(\nabla \Phi(x_0))}\Phi_{x_j}(x_0)\nu^j_{\partial\Omega}(x_0)\leq0.$$
	Therefore,
	\begin{equation}\label{final_3}
		\min \left\{ \max\left\{  \sum_{j \in I(\nabla \Phi(x_0))}\Phi_{x_j}(x_0)\nu^j_{\partial\Omega}(x_0), -\Norm{\nabla\Phi(x_0)}{\ell^\infty}+\Sigma_\infty(\Omega) |\Phi(x_0)| \right\} ,-\widetilde{\Delta}\Phi(x_0) \right\}\leq0.
	\end{equation}
	
	Now assume that $\Phi$ is touching $u$ from below in $x_0$. Since $u_{2,p_i}$ converges uniformly to $u_{2,\infty}$, we have that 
	$u_{2,p_i}-\Phi$  admits a minimum at some point $x_i\in\overline{\Omega}$, with $x_i\to x_0$. If $x_i\in\Omega$ for infinitely many $i$, arguing as before, we obtain that $-\widetilde{\Delta }\Phi_{2,\infty}(x_0)\geq 0$. 
	If $x_i\in\partial\Omega$ ultimately for any $i$, then 
	\begin{equation*}
		\sum_{j=1}^{N}\left|\Phi_{x_j}(x_i)\right|^{p_i-2} 
		\Phi_{x_j}(x_i)\;\nu^j_{\partial\Omega}(x_i)\geq \Sigma_{p_i}^{p_i}(\Omega)\; |\Phi(x_i)|^{p_i-2}\Phi(x_i)\rho_{p_i}(x_i).
	\end{equation*}
	If $\nabla \Phi(x_0)=0$, then $\frac{\partial\Phi}{\partial\nu}(x_0)=0$; otherwise we obtain 
	
	\begin{equation}\label{dis3}
		\sum_{j=1}^{N}\left| \frac{\Phi_{x_j}(x_i)}{\Norm{\nabla\Phi(x_i)}{\ell^\infty}} \right|^{p_i-2}\Phi_{x_j}(x_i)\;\nu^j_{\partial\Omega}(x_i)\geq \Sigma_{p_i}^{p_i/(p_i-1)}(\Omega)\left| \dfrac{\Sigma_{p_i}^{p_i/(p_i-1)}(\Omega)\Phi(x_i)}{\Norm{\nabla\Phi(x_i)}{\ell^\infty}}\right|^{p_i-2}\Phi(x_i)\rho_{p_i}(x_i).
	\end{equation}
	If $\Sigma_{\infty}(\Omega)|\Phi(x_0)|<\Norm{\nabla\Phi(x_0)}{\ell^\infty}$, then, taking the limit in equation \eqref{dis3}, we achieve
	$$  \sum_{j \in I(\nabla \Phi(x_0))}\Phi_{x_j}(x_0)\nu^j_{\partial\Omega}(x_0)\geq0$$
	and consequently
	\begin{equation}\label{final_4}
		\max \left\{ \max\left\{  \sum_{j \in I(\nabla \Phi(x_0))}\Phi_{x_j}(x_0)\nu^j_{\partial\Omega}(x_0), -\Norm{\nabla\Phi(x_0)}{\ell^\infty}+ \Sigma_\infty(\Omega) |\Phi(x_0)| \right\} ,-\widetilde{\Delta}\Phi(x_0) \right\}\geq 0.
	\end{equation}

	Now let us suppose that $u(x_0)=0$ and  
	assume that $\Phi$ is touching $u$ from below in $x_0$. Since $u_{2,p_i}$ converges uniformly to $u_{2,\infty}$, we have that 
	$u_{2,p_i}-\Phi$  admits  a minimum at some point $x_i\in\overline{\Omega}$, with $x_i\to x_0$. If $x_i\in\Omega$ for infinitely many $i$, arguing as before, we obtain that $-\widetilde{\Delta }\Phi_{2,\infty}(x_0)\geq 0$. 
	If $x_i\in\partial\Omega$ ultimately for any $i$, then 
	\begin{equation*}
		\sum_{j=1}^{N}\left|\Phi_{x_j}(x_i)\right|^{p_i-2} 
		\Phi_{x_j}(x_i)\;\nu^j_{\partial\Omega}(x_i)\geq \Sigma_{p_i}^{p_i}(\Omega)\; |\Phi(x_i)|^{p_i-2}\Phi(x_i)\rho_{p_i}(x_i).
	\end{equation*}
	If $\nabla \Phi(x_0)=0$, then $\frac{\partial\Phi}{\partial\nu}(x_0)=0$; otherwise we obtain 
	
	\begin{equation*}
		\sum_{j=1}^{N}\left| \frac{\Phi_{x_j}(x_i)}{\Norm{\nabla\Phi(x_i)}{\ell^\infty}} \right|^{p_i-2}\Phi_{x_j}(x_i)\;\nu^j_{\partial\Omega}(x_i)\geq \Sigma_{p_i}^{p_i/(p_i-1)}(\Omega)\left| \dfrac{\Sigma_{p_i}^{p_i/(p_i-1)}(\Omega)\Phi(x_i)}{\Norm{\nabla\Phi(x_i)}{\ell^\infty}}\right|^{p_i-2}\Phi(x_i)\rho_{p_i}(x_i).
	\end{equation*}
	Since $0=\Sigma_{\infty}(\Omega)|\Phi(x_0)|<\Norm{\nabla\Phi(x_0)}{\ell^\infty}$, we obtain
	$$  \sum_{j \in I(\nabla \Phi(x_0))}\Phi_{x_j}(x_0)\nu^j_{\partial\Omega}(x_0)\geq0,$$
	hence
	\begin{equation}\label{final_5}
		\max \left\{  \sum_{j \in I(\nabla \Phi(x_0))}\Phi_{x_j}(x_0)\nu^j_{\partial\Omega}(x_0) ,-\widetilde{\Delta}\Phi(x_0) \right\}\geq0.
	\end{equation}
	
	Finally, assume that $\Phi$ is touching $u$ from above in $x_0$. Since $u_{2,p_i}$ converges uniformly to $u_{2,\infty}$, we have that 
	$u_{2,p_i}-\Phi$  admits a maximum at some point $x_i\in\overline{\Omega}$, with $x_i\to x_0$. If $x_i\in\Omega$ for infinitely many $i$, arguing as before, we obtain that $-\widetilde{\Delta }\Phi_{2,\infty}(x_0)\leq 0$. 
	If $x_i\in\partial\Omega$ ultimately for any $i$, then 
	\begin{equation*}
		\sum_{j=1}^{N}\left|\Phi_{x_j}(x_i)\right|^{p_i-2} 
		\Phi_{x_j}(x_i)\;\nu^j_{\partial\Omega}(x_i)\leq \Sigma_{p_i}^{p_i}(\Omega)\; |\Phi(x_i)|^{p_i-2}\Phi(x_i)\rho_{p_i}(x_i).
	\end{equation*}
	If $\nabla \Phi(x_0)=0$, then $\frac{\partial\Phi}{\partial\nu}(x_0)=0$; otherwise we obtain 
	
	\begin{equation*}
		\sum_{j=1}^{N}\left| \frac{\Phi_{x_j}(x_i)}{\Norm{\nabla\Phi(x_i)}{\ell^\infty}} \right|^{p_i-2}\Phi_{x_j}(x_i)\;\nu^j_{\partial\Omega}(x_i)\leq \Sigma_{p_i}^{p_i/(p_i-1)}(\Omega)\left| \dfrac{\Sigma_{p_i}^{p_i/(p_i-1)}(\Omega)\Phi(x_i)}{\Norm{\nabla\Phi(x_i)}{\ell^\infty}}\right|^{p_i-2}\Phi(x_i)\rho_{p_i}(x_i).
	\end{equation*}
	Since $0=\Sigma_{\infty}(\Omega)|\Phi(x_0)|<\Norm{\nabla\Phi(x_0)}{\ell^\infty}$, we obtain
	$$  \sum_{j \in I(\nabla \Phi(x_0))}\Phi_{x_j}(x_0)\nu^j_{\partial\Omega}(x_0)\leq0,$$
	hence
	\begin{equation}\label{final_6}
		\min \left\{  \sum_{j \in I(\nabla \Phi(x_0))}\Phi_{x_j}(x_0)\nu^j_{\partial\Omega}(x_0) ,-\widetilde{\Delta}\Phi(x_0) \right\}\leq0.
	\end{equation}
	
	The Theorem follows from \eqref{final_1}-\eqref{final_2}-\eqref{final_3}-\eqref{final_4}-\eqref{final_5}-\eqref{final_6}.

\end{proof}

\section{Brock-Weinstock and Weinstock type inequalities for the orthotropic $\infty$-Laplacian}
Let us denote
\begin{equation*}
	\mathcal{W}_{p}:=\{x\in\mathbb{R}^N\;|\Norm{x}{\ell^p}\leq 1\}
\end{equation*}
and, for any bounded convex set $\Omega \subseteq \mathbb{R}^N$,
\begin{equation*}
	\cP_p(\Omega):=\int_{\partial \Omega}\rho_p(x)d\cH^{N-1}(x), \quad \cM_p(\Omega):=\int_{\partial \Omega}|x|^p\rho_p(x)d\cH^{N-1}(x),
\end{equation*}
that are, respectively, the anisotropic perimeter and the boundary $p$-momentum with respect to the $\ell^p$ norm on $\R^N$.

We are interested in Brock-Weinstock and Weinstock type inequalities. In the case $p=2$, Weinstock inequality is given by
\begin{equation}\label{Wineq}
	\Sigma^2_{2}(\Omega)\cP_2(\Omega)^{\frac{1}{N-1}}\le \Sigma^2_{2}(\W_2)\cP_2(\W_2)^{\frac{1}{N-1}}
\end{equation}
and it has been proven in \cite{weinstock1954inequalities} in the case $N=2$ for simply connected sets and generalized for $N>2$ in \cite{bucur2017weinstock} (restricted to convex sets). A weaker version of this inequality, given by
\begin{equation}\label{BWineq}
	\Sigma^2_{2}(\Omega)V(\Omega)^{\frac{1}{N}}\le \Sigma^2_{2}(\W_2)V(\Omega)^{\frac{1}{N}},
\end{equation}
was proven in \cite{brock2001isoperimetric} and, for this reason, we refer to inequalities involving the Steklov eingenvalue and the volume as Brock-Weinstock inequalities.\\
A quantitative version of inequality \label{Wineq} has been achieved in \cite{gavitone2019quantitative}. Both the papers \cite{bucur2017weinstock,gavitone2019quantitative} rely on a particular isoperimetric inequality that has been generalized in \cite{paoli2019anisotropic} and that we now recall in the form that we are going to use.  Let us define the scaling invariant shape operator
\begin{equation*}
	\cI_p(\Omega):=\frac{\cM_p(\Omega)}{\cP_p(\Omega)V(\Omega)^{\frac{p}{N}}}.
\end{equation*}
Then, for any $p \in (1,\infty)$ and for any open bounded convex set $\Omega \subseteq \R^N$,  it holds
\begin{equation}\label{ineqI}
	\cI_p(\Omega)\ge \cI_p(\W_p).
\end{equation}
Moreover, let us observe that, by definition of $\W_p$ and by using the relation $NV(\W_p)=\cP_p(\W_p)$,
\begin{equation}\label{ineqeasy}
	\frac{NV(\W_p)}{\cM(\W_p)}=\frac{\cP_p(\W_p)}{\cP_p(\W_p)}=1.
\end{equation}
In the general case of the orthotropic $p$-Laplacian, the following Brock-Weinstock type inequality (restricted to bounded convex open sets) has been proven in \cite{brasco2013anisotropic}
\begin{equation}\label{BFBWineq}
	\Sigma_p^p(\Omega)V(\Omega)^{\frac{p-1}{N}}\le V(\W_p)^{\frac{p-1}{N}}.
\end{equation}
As a first step, we want to improve the previous inequality, to include in some way the perimeter.
\begin{thm}\label{thmmain}
	Let $\Omega \subset \R^N$ be an open bounded convex set and $p>1$. Consider $q\ge 0$ and $r \in [0,N]$ such that $\frac{p}{N}=q+\frac{r}{N}$.
	Then, we have
	\begin{equation}\label{eq:step}
		\Sigma_{p}^p(\Omega)\cP_p(\Omega)^{\frac{r-1}{N-1}}V(\Omega)^{q}\le\cP_p(\W_p)^{\frac{r-1}{N-1}}V(\W_p)^{q}.
	\end{equation} 
\end{thm}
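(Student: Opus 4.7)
The strategy is to derive a second endpoint inequality complementing the Brock--Weinstock inequality \eqref{BFBWineq} of \cite{brasco2013anisotropic} (which is precisely \eqref{eq:step} at $r=1$, $q=(p-1)/N$), and then to interpolate multiplicatively in the exponents. The new endpoint corresponds to $r=N$, $q=p/N-1$, and its derivation uses coordinate functions as test functions combined with the moment--perimeter--volume inequality \eqref{ineqI}.

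Concretely, for each $i=1,\dots,N$, I would set $u_i(x)=x_i-c_i$, choosing $c_i\in\R$ so that $\int_{\partial\Omega}|u_i|^{p-2}u_i\rho_p\,d\cH^{N-1}=0$; such $c_i$ exists and is unique since $c\mapsto\int_{\partial\Omega}|x_i-c|^{p-2}(x_i-c)\rho_p\,d\cH^{N-1}$ is continuous, strictly decreasing, and changes sign (here one uses that $\Omega$, being a bounded open convex set, is not contained in any hyperplane, so the $u_i$ are non-trivial). Since $\nabla u_i=e_i$, one has $\Norm{\nabla u_i}{\ell^p}=1$, and the variational characterization \eqref{varcar} yields
\begin{equation*}
\Sigma_p^p(\Omega)\int_{\partial\Omega}|x_i-c_i|^p\rho_p\,d\cH^{N-1}\le V(\Omega).
\end{equation*}
Setting $c=(c_1,\dots,c_N)$ and summing over $i$, the left-hand side is $\Sigma_p^p(\Omega)\cM_p(\Omega-c)$, so $\Sigma_p^p(\Omega)\cM_p(\Omega-c)\le NV(\Omega)$.

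Next, I would apply \eqref{ineqI} from \cite{paoli2019anisotropic} to the translated set $\Omega-c$; translation invariance of $\cP_p$ and $V$ gives $\cM_p(\Omega-c)\ge\cI_p(\W_p)\cP_p(\Omega)V(\Omega)^{p/N}$. From \eqref{ineqeasy} together with $\cP_p(\W_p)=NV(\W_p)$ one reads off $\cI_p(\W_p)=V(\W_p)^{-p/N}$, whence
\begin{equation*}
\Sigma_p^p(\Omega)\cP_p(\Omega)V(\Omega)^{p/N-1}\le \cP_p(\W_p)V(\W_p)^{p/N-1},
\end{equation*}
which is \eqref{eq:step} at $r=N$. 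Call this inequality (B). For $r\in[1,N]$, set $\alpha=(r-1)/(N-1)\in[0,1]$ and multiply \eqref{BFBWineq} raised to the power $1-\alpha$ with (B) raised to the power $\alpha$: the exponent of $V(\Omega)$ becomes $(1-\alpha)(p-1)/N+\alpha(p/N-1)=(p-r)/N=q$ and the exponent of $\cP_p(\Omega)$ becomes $\alpha=(r-1)/(N-1)$, reproducing \eqref{eq:step}. For $r\in[0,1]$, where the $\cP_p$ exponent is non-positive, I would instead multiply \eqref{BFBWineq} with the anisotropic isoperimetric inequality $\cP_p(\Omega)V(\Omega)^{-(N-1)/N}\ge\cP_p(\W_p)V(\W_p)^{-(N-1)/N}$ raised to $\beta=(1-r)/(N-1)\ge 0$; the same exponent check yields \eqref{eq:step}.

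The substantive step is the application of the moment inequality \eqref{ineqI} after translation: it is precisely what upgrades the trace-type bound coming from the coordinate test functions into the perimeter-weighted endpoint (B), and without it one only recovers \eqref{BFBWineq}. Once (B) and \eqref{BFBWineq} are both in hand, the theorem reduces to a routine multiplicative interpolation in the exponents.
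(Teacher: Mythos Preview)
Your proposal is correct and rests on the same three ingredients as the paper's proof: coordinate functions as test functions in \eqref{varcar}, the moment--perimeter--volume inequality \eqref{ineqI}, and the anisotropic isoperimetric inequality. The difference is purely organizational. You first isolate the endpoint $r=N$ (your inequality (B)) and then interpolate: for $r\in[1,N]$ you take a convex combination of (B) with the Brock--Weinstock inequality \eqref{BFBWineq}, and for $r\in[0,1]$ you multiply \eqref{BFBWineq} by a suitable power of the isoperimetric inequality. The paper instead carries out the whole argument in a single chain: after obtaining $\Sigma_p^p(\Omega)\le NV(\Omega)/\cM_p(\Omega)$ and applying \eqref{ineqI}, it factors the resulting bound so that the isoperimetric ratio appears with exponent $(N-r)/(N-1)\ge 0$, and a single application of the isoperimetric inequality then yields \eqref{eq:step} for all $r\in[0,N]$ at once, without invoking \eqref{BFBWineq} or splitting into cases. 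In fact your case split is avoidable even within your framework: multiplying (B) by the isoperimetric inequality raised to the power $(N-r)/(N-1)$ gives \eqref{eq:step} directly for every $r\in[0,N]$, and \eqref{BFBWineq} is itself a consequence of (B) together with the isoperimetric inequality, so it need not be quoted separately.
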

\begin{proof}
	Let us first recall that by \cite[Lemma $7.1$]{brasco2013anisotropic}, we can use the functions $x_i$ with $i=1,\dots,N$ as test functions in the Rayleigh quotient $\cR_p$ for $\Sigma_{p}^p$ (up to a rigid movement of $\Omega$), with
	\begin{equation*}
		\cR_p[x_i]=\frac{V(\Omega)}{\int_{\partial \Omega}|x_i|^p\rho_p(x)d\cH^{N-1}(x)},
	\end{equation*}
	hence, for any $i=1,\dots,N$,  we have
	\begin{equation*}
		\Sigma_{p}^p(\Omega)\int_{\partial \Omega}|x_i|^p\rho_p(x)d\cH^{N-1}(x)\le V(\Omega).
	\end{equation*}
	Summing over $i$ we have
	\begin{equation}\label{pass5.2.1}
		\Sigma_{p}^p(\Omega)\le \frac{NV(\Omega)}{\cM_p(\Omega)}.
	\end{equation}
	Now let us write inequality \eqref{ineqI} explicitly to achieve
	\begin{equation*}
		\frac{\cM_p(\Omega)}{\cP_p(\Omega)V(\Omega)^{\frac{p}{N}}}\ge \frac{\cM_p(\W_p)}{\cP_p(\W_p)V(\W_p)^{\frac{p}{N}}}
	\end{equation*}
	and then
	\begin{equation*} 
		\cM_p(\Omega)\ge \frac{\cM_p(\W_p)\cP_p(\Omega)V(\Omega)^{\frac{p}{N}}}{\cP_p(\W_p)V(\W_p)^{\frac{p}{N}}}.
	\end{equation*}
	Using this inequality in equation \eqref{pass5.2.1} we get
	\begin{equation*}\label{pass3.2.5}
		\Sigma_{p}^p(\Omega)\le \frac{NV(\Omega)\cP_p(\W_p)V(\W_p)^{\frac{p}{N}}}{\cM_p(\W_p)\cP_p(\Omega)V(\Omega)^{\frac{p}{N}}},
	\end{equation*}
	that can be recast as
	\begin{align*}
		\Sigma_{p}^p(\Omega)\le \frac{N\cP_p(\W_p)V(\W_p)^{\frac{p}{N}}}{\cM_p(\W_p)\cP_p(\Omega)^{\frac{r-1}{N-1}}V^{q}(\Omega)}\left(\frac{V(\Omega)^{1-\frac{1}{N}}}{\cP_p(\Omega)}\right)^{\frac{N-r}{N-1}}.
	\end{align*}
	Let us recall the anisotropic standard isoperimetric inequality (see \cite[Proposition $2.3$]{alvino1997convex}):
	\begin{equation*}
		\frac{V(\Omega)^{1-\frac{1}{N}}}{\cP_p(\Omega)}\le \frac{V(\W_p)^{1-\frac{1}{N}}}{\cP_p(\W_p)}.
	\end{equation*}
	Thus, since $r < N$ and then $\frac{N-r}{N-1}>0$, we have
	\begin{equation*}
		\Sigma_{p}^p(\Omega)\le \frac{N\cP_p(\W_p)V(\W_p)^{\frac{p}{N}}}{\cM_p(\W_p)\cP_p(\Omega)^{\frac{r-1}{N-1}}V(\Omega)^q}\left(\frac{V(\W_p)^{1-\frac{1}{N}}}{\cP_p(\W_p)}\right)^{\frac{N-r}{N-1}}
	\end{equation*}
	and then, recalling that $p/N=q+r/N$, we finally get
	\begin{equation*}\label{pass2.5.2}
		\Sigma^p_{p}(\Omega)\cP_p(\Omega)^{\frac{r-1}{N-1}}V(\Omega)^q\le \frac{N V(\W_p)}{\cM_p(\W_p)}\cP_p(\W_p)^{\frac{r-1}{N-1}}V(\W_p)^q.
	\end{equation*}
	Equality \eqref{ineqeasy} concludes the proof.\\
\end{proof}
\begin{rmk}
	Let us observe that Theorem \ref{thmmain} includes inequality \eqref{BFBWineq}. Indeed, for any bounded convex set $\Omega$ and any $p>1$ we can fix $r=1$ and then $q=\frac{p-1}{N}$ in inequality \eqref{eq:step} to obtain the desired result.\\
	Moreover, let us observe that, in general, inequality \eqref{eq:step} implies inequality \eqref{BFBWineq}. Indeed, since the left-hand side of equation \eqref{eq:step} is scaling invariant, we can always suppose $\cP_p(\Omega)=\cP_p(\W_p)$. Thus, the aforementioned equation becomes
	\begin{equation*}
		\Sigma_{p}^p(\Omega)V^q(\Omega)\le V^q(\W_p).
	\end{equation*}
	Multiplying both sides by $V^\frac{p-r}{N}(\Omega)$ we have
	\begin{equation*}
		\Sigma_{p}^p(\Omega)V^\frac{p-1}{N}(\Omega)\le V^q(\W_p)V^\frac{p-r}{N}(\Omega)\le V^\frac{p-1}{N}(\W_p),
	\end{equation*}
	where the last inequality follows from the anisotropic isoperimetric inequality.\\
	As in \cite{brasco2013anisotropic}, we are not able to detect equality cases. However, let us stress out that equality could not hold even for $\W_p$ if $\Sigma_p^p(\W_p)<1$. Let us recall that in general it is known that $\Sigma_p^p(\W_p)\le 1$, but determining if it is actually equal to $1$ or not is still an open problem, except that for $p=2$.
\end{rmk}
An improvement that involves only the perimeter can be shown if $p \le N$. Indeed, we have the following Corollary.
\begin{cor}
	Let $\Omega \subset \R^N$ be an open bounded convex set and $p \in (1,N]$.
	Then, we have
	\begin{equation*}
		\Sigma^p_{p}(\Omega)\cP_p(\Omega)^{\frac{p-1}{N-1}}\le\cP_p(\W_p)^{\frac{p-1}{N-1}}.
	\end{equation*}
\end{cor}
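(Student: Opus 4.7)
The plan is to derive the corollary as a direct specialization of Theorem \ref{thmmain}. The constraint $\frac{p}{N} = q + \frac{r}{N}$ with $q \ge 0$ and $r \in [0,N]$ admits the choice $r = p$, $q = 0$ precisely when $p \in (1, N]$; this is the only place the hypothesis $p \le N$ enters. Substituting these values into inequality \eqref{eq:step} makes the $V(\Omega)^q$ and $V(\W_p)^q$ factors collapse to $1$, leaving exactly
\begin{equation*}
\Sigma^p_p(\Omega)\,\cP_p(\Omega)^{\frac{p-1}{N-1}} \le \cP_p(\W_p)^{\frac{p-1}{N-1}},
\end{equation*}
which is the desired Weinstock-type inequality.

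There is essentially no obstacle: the corollary is purely a matter of checking admissibility of the exponents. The only point worth flagging is that the restriction $p \le N$ is sharp for this particular one-parameter reduction, since $r = p > N$ would fall outside the allowed range $r \in [0,N]$ required by Theorem \ref{thmmain}. For $p > N$ one would have to balance a genuinely positive $q$ against $r < N$, which is why the pure perimeter statement does not extend beyond $p = N$ through this route. No additional isoperimetric input is needed beyond what is already embedded in Theorem \ref{thmmain}.
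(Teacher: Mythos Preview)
Your proof is correct and matches the paper's own argument exactly: the paper also simply observes that for $p\in(1,N]$ one may take $r=p$ and $q=0$ in inequality \eqref{eq:step}.
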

\begin{proof}
	Just observe that if $p \in (1,N]$, we can choose $r=p$ and $q=0$ in equation \eqref{eq:step}.
\end{proof}
\begin{rmk}
	If the conjecture by Brasco and Franzina in \cite{brasco2013anisotropic} reveals to be true, i. e. the fact that $\Sigma^p_{p}(\W_p)=1$, last result implies the Weinstock inequality for the orthotropic $p$-Laplacian as $p \in (1,N]$.
\end{rmk}
In any case, we can recast equation \eqref{BFBWineq} as
\begin{equation*}
	\Sigma_p(\Omega)V^{\frac{p-1}{Np}}(\Omega) \le V^{\frac{p-1}{Np}}(\W_p)
\end{equation*}
and then take the limit as $p \to +\infty$ to obtain
\begin{equation}\label{ineq:BWw}
	\Sigma_{\infty}(\Omega)V(\Omega)^{\frac{1}{N}}\le V(\W_\infty)^{\frac{1}{N}},
\end{equation}
that cannot be rewritten in a full scaling-invariant form since $\Sigma_{\infty}(\W_\infty)=1/N$. Moreover, being $V(\W_\infty)=2^N$, equation \eqref{ineq:BWw} can be rewritten as
\begin{equation*}
	\Sigma_{\infty}(\Omega)V(\Omega)^{\frac{1}{N}}\le 2.
\end{equation*}
However, we can improve such inequality by means of an anisotropic isodiametric inequality.
\begin{cor}
	For any bounded convex open set $\Omega \subset \R^N$ it holds
	\begin{equation}\label{eq:isodiam}
		\Sigma_{\infty}(\Omega)V(\Omega)^{1/N}\leq \Sigma_{\infty}(\W_1)V(\W_1)^{1/N}.
	\end{equation}
	Equality holds if and only if $\Omega$ is equivalent to $\W_1$ up to translations and scalings.
\end{cor}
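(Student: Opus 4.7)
The plan is to reduce \eqref{eq:isodiam} to an anisotropic isodiametric statement. Using the characterization $\Sigma_\infty(\Omega)=2/\diam_1(\Omega)$ from Proposition \ref{prop_lim} on both sides, and observing that $\diam_1(\W_1)=2$ so that $\Sigma_\infty(\W_1)V(\W_1)^{1/N}=V(\W_1)^{1/N}$, the target inequality becomes equivalent to
\begin{equation*}
2^N V(\Omega) \le \diam_1(\Omega)^N\, V(\W_1).
\end{equation*}

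To prove this, I would work with the centrally symmetric convex body $\Omega-\Omega$ and combine a trivial inclusion with the Brunn--Minkowski inequality. First, by definition of the $\ell^1$-diameter, every $z=x-y$ with $x,y\in\Omega$ satisfies $\Norm{z}{\ell^1}\le\diam_1(\Omega)$, which gives the inclusion
\begin{equation*}
\Omega-\Omega \subseteq \diam_1(\Omega)\,\W_1, \qquad \text{hence} \qquad V(\Omega-\Omega)\le \diam_1(\Omega)^N V(\W_1).
\end{equation*}
On the other hand, since $\Omega$ (and thus $-\Omega$) is convex with $V(-\Omega)=V(\Omega)$, the Brunn--Minkowski inequality yields
\begin{equation*}
V(\Omega-\Omega)^{1/N} = V(\Omega+(-\Omega))^{1/N} \ge V(\Omega)^{1/N}+V(-\Omega)^{1/N} = 2V(\Omega)^{1/N}.
\end{equation*}
Combining the two estimates produces the desired bound.

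For the equality case, equality in \eqref{eq:isodiam} forces equality in both the Brunn--Minkowski step and in the inclusion $\Omega-\Omega\subseteq\diam_1(\Omega)\W_1$. The former requires $\Omega$ and $-\Omega$ to be homothetic, so $\Omega$ is centrally symmetric about some point $c$, i.e.\ $\Omega-c=-(\Omega-c)$ and consequently $\Omega-\Omega=2(\Omega-c)$. Equality in the set inclusion (both sides being convex with equal volume) then forces $\Omega-\Omega=\diam_1(\Omega)\W_1$, so that $\Omega=c+\tfrac12\diam_1(\Omega)\W_1$, which is exactly the claimed characterization. The main delicate point is this equality analysis: one must carefully chain together the two equality conditions (Brunn--Minkowski plus equality of nested convex bodies of equal measure) to conclude that $\Omega$ is precisely a translate and scaling of $\W_1$.
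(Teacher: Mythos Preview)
Your proof is correct. Both you and the paper reduce the inequality to the anisotropic isodiametric inequality
\[
\frac{2}{\diam_1(\Omega)}\,V(\Omega)^{1/N}\le V(\W_1)^{1/N},
\]
using the identification $\Sigma_\infty(\Omega)=2/\diam_1(\Omega)$ and $\Sigma_\infty(\W_1)=1$. The difference is that the paper simply invokes this isodiametric inequality (and its equality cases) as a black box from an external reference, whereas you supply a self-contained proof via the difference body: the inclusion $\Omega-\Omega\subseteq\diam_1(\Omega)\,\W_1$ together with Brunn--Minkowski applied to $\Omega+(-\Omega)$. This is precisely the classical Bieberbach argument, transported to the $\ell^1$ norm. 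Your route is more elementary and makes the corollary independent of the cited paper; the paper's route is shorter on the page but opaque without chasing the citation. Your equality analysis is also correct and in fact more informative than the paper's, which again just defers to the reference: equality in Brunn--Minkowski forces central symmetry, and then equality of volumes for the nested convex bodies $\Omega-\Omega\subseteq\diam_1(\Omega)\,\W_1$ forces them to coincide (up to boundary), yielding $\Omega=c+\tfrac12\diam_1(\Omega)\,\W_1$.
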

\begin{proof} 
	Let us observe that, by \cite[Proposition $2.1$]{piscitelli2019anisotropic}, we have
	\begin{equation*}
		\frac{2}{{\rm diam}_1(\Omega)}V^{1/N}(\Omega)\leq V^{1/N}(\W_1) 
	\end{equation*}
	Recalling that $\Sigma_\infty(\Omega)=\frac{2}{{\rm diam}_1(\Omega)}$ and $\Sigma_\infty(\W_1)=\frac{2}{{\rm diam}_1(\W_1)}=1$ we conclude the proof. Equality cases follow from \cite[Proposition $2.1$]{piscitelli2019anisotropic}.
\end{proof}
\begin{rmk}
	Let us observe that inequality \eqref{eq:isodiam} implies inequality \eqref{ineq:BWw}, since $V(\W_1)=2^{\frac{N}{2}}$.
\end{rmk}
On the other hand, a Weinstock-type inequality in the planar case follows from the Rosenthal-Szasz inequality in Radon planes (see \cite{balestro2019rosenthal}). To give this result we need to introduce the concept of width in our case. Fix $N=2$ and consider any bounded open convex set $\Omega$. For each direction $v$ there exists two supporting lines $r_1,r_2$ for $\Omega$ that are orthogonal to $v$ in the Euclidean sense. We call width of $\Omega$ in the direction $v$ the distance $\omega(v)=d_1(r_1,r_2)$. With this in mind, we can give the following result.
\begin{cor}
	For any  open bounded convex set $\Omega\subseteq \R^2$ it holds
	\begin{equation}\label{ineq:Ws}
		\Sigma_{\infty}(\Omega)P_\infty(\Omega)\le \Sigma_\infty(\W_1)P_\infty(\W_1).
	\end{equation}
	Equality holds if and only if $\Omega$ is of constant width, i.e. if and only if $\omega(v)\equiv \diam_1(\Omega)$. 
\end{cor}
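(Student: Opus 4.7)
The plan is to reduce inequality \eqref{ineq:Ws} to a Rosenthal--Szasz-type upper bound on the perimeter by the diameter in the correct Radon plane. The first step is to invoke the explicit formula
\begin{equation*}
\Sigma_\infty(\Omega) = \frac{2}{\diam_1(\Omega)}
\end{equation*}
from Proposition \ref{prop_lim}, applied both to $\Omega$ and to $\W_1$. With this substitution, \eqref{ineq:Ws} becomes equivalent to
\begin{equation*}
\frac{P_\infty(\Omega)}{\diam_1(\Omega)} \le \frac{P_\infty(\W_1)}{\diam_1(\W_1)},
\end{equation*}
so the task reduces to showing that the ratio between the anisotropic perimeter $P_\infty$ and the $\ell^1$-diameter is maximized by $\W_1$ among bounded convex sets in $\R^2$.

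Next I would invoke the Radon-plane version of the Rosenthal--Szasz inequality from \cite{balestro2019rosenthal}. In $\R^2$ the norms $\|\cdot\|_{\ell^1}$ and $\|\cdot\|_{\ell^\infty}$ are mutually dual, and this pair induces a Radon plane structure. Since $\rho_\infty(x) = \|\nu_{\partial \Omega}(x)\|_{\ell^1}$ is precisely the dual of $\|\cdot\|_{\ell^\infty}$ evaluated on the outer normal, the anisotropic perimeter $P_\infty$ is exactly the length functional on curves of that Radon plane whose natural diameter functional is $\diam_1$. The Rosenthal--Szasz inequality in this setting bounds the perimeter of any convex body by a sharp constant times its diameter, with the constant realized by any body of constant width. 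Since $\W_1$ is such a body (its $\ell^1$-width is constantly equal to $\diam_1(\W_1)=2$), the sharp constant equals $P_\infty(\W_1)/\diam_1(\W_1)$, and a direct substitution yields the desired inequality.

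For the equality case, the Rosenthal--Szasz theorem in Radon planes characterizes equality exactly as the convex bodies of constant width with respect to the ambient norm. In our setting, the width $\omega(v)$ is the $\ell^1$-distance between the two parallel supporting lines orthogonal (in the Euclidean sense) to $v$, so equality in \eqref{ineq:Ws} holds if and only if $\omega(v) \equiv \diam_1(\Omega)$ for every direction $v$. The main obstacle is essentially bookkeeping: one has to carefully match the perimeter, diameter, and constant-width conventions of \cite{balestro2019rosenthal} with the functionals $P_\infty$, $\diam_1$ and $\omega$ used here, and verify that the Radon-plane notion of constant width coincides with the one formulated via $\omega$. Once this dictionary is established, the inequality follows by a one-line substitution.
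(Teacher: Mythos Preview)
Your proposal is correct and follows essentially the same approach as the paper: substitute $\Sigma_\infty(\Omega)=2/\diam_1(\Omega)$ (and $\Sigma_\infty(\W_1)=1$, since $\diam_1(\W_1)=2$) and then invoke the Rosenthal--Szasz inequality for Radon planes from \cite{balestro2019rosenthal}, with the equality characterization coming directly from that theorem. The only difference is cosmetic: the paper quotes the inequality already in the form $2P_\infty(\Omega)/\diam_1(\Omega)\le P_\infty(\W_1)$ and plugs in, while you first rewrite \eqref{ineq:Ws} as a ratio comparison and then discuss the matching of conventions.
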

\begin{proof}
	Let us recall that the Rosenthal-Szasz inequality for Radon planes \cite[Theorem $1.1$]{balestro2019rosenthal}, specified to the plane $(\R^2,\Norm{\cdot}{\ell^1})$, is given by
	\begin{equation*}
		\frac{2 P_\infty(\Omega)}{\diam_1(\Omega)}\le P_\infty(\W_1). 
	\end{equation*}
	Recalling that $\Sigma_\infty(\W_1)=1$ and $\Sigma_\infty(\Omega)=\frac{2}{\diam_1(\Omega)}$ we conclude the proof. Equality cases follow from equality cases of the Rosenthal-Szasz inequality in \cite[Theorem $1.1$]{balestro2019rosenthal}.
\end{proof}
\section*{Acknowledgments}
We would like to thank the referee for his/her really useful suggestions to improve the paper.
\bibliographystyle{abbrv}
\bibliography{biblio}
\end{document}